\definecolor{verylight}{gray}{0.97}
\definecolor{light}{gray}{0.9}
\definecolor{medium}{gray}{0.85}
\definecolor{dark}{gray}{0.6}
 \def\G{{\mathcal G}}
 \def\opn#1#2{\def#1{\operatorname{#2}}} 
 \opn\chara{char} \opn\length{\ell} \opn\pd{pd} \opn\rk{rk}
 \opn\projdim{proj\,dim} \opn\injdim{inj\,dim} \opn\rank{rank}
 \opn\depth{depth} \opn\grade{grade} \opn\height{height}
 \opn\embdim{emb\,dim} \opn\codim{codim}
 \opn\Tr{Tr} \opn\bigrank{big\,rank}
 \opn\superheight{superheight}\opn\lcm{lcm}
 \opn\trdeg{tr\,deg}
 \opn\reg{reg} \opn\lreg{lreg} \opn\ini{in} \opn\lpd{lpd}
 \opn\size{size} \opn\sdepth{sdepth}
 \opn\link{link}\opn\fdepth{fdepth}\opn\lex{lex}
 \opn\tr{tr}
 \opn\type{type}
 \opn\gap{gap}
 \opn\arithdeg{arith-deg}
 \opn\div{div} \opn\Div{Div} \opn\cl{cl} \opn\Cl{Cl}
 \opn\Spec{Spec} \opn\Supp{Supp} \opn\supp{supp} \opn\Sing{Sing}
 \opn\Ass{Ass} \opn\Min{Min}\opn\Mon{Mon}
 \opn\Ann{Ann} \opn\Rad{Rad} \opn\Soc{Soc}
 \opn\Im{Im} \opn\Ker{Ker} \opn\Coker{Coker} \opn\Am{Am}
 \opn\Hom{Hom} \opn\Tor{Tor} \opn\Ext{Ext} \opn\End{End}
 \opn\Aut{Aut} \opn\id{id}
 \opn\nat{nat}
 \opn\pff{pf}
 \opn\Pf{Pf} \opn\GL{GL} \opn\SL{SL} \opn\mod{mod} \opn\ord{ord}
 \opn\Gin{Gin} \opn\Hilb{Hilb}\opn\sort{sort}
 \opn\PF{PF}\opn\Ap{Ap}
 \opn\mult{mult}
 \opn\bight{bight}
 \opn\aff{aff}
 \opn\relint{relint} \opn\st{st}
 \opn\lk{lk} \opn\cn{cn} \opn\core{core} \opn\vol{vol}  \opn\inp{inp} \opn\nilpot{nilpot}
 \opn\link{link} \opn\star{star}\opn\lex{lex}\opn\set{set}
 \opn\width{wd}
 \opn\Fr{F}
 \opn\QF{QF}
 \opn\G{G}
 \opn\type{type}\opn\res{res}
 \opn\conv{conv}
 \opn\Ind{Ind}
 \opn\gr{gr}
 \def\pot#1#2{#1[\kern-0.28ex[#2]\kern-0.28ex]}
 \opn\dirlim{\underrightarrow{\lim}}
 \opn\inivlim{\underleftarrow{\lim}}
 \def\Implies{\ifmmode\Longrightarrow \else
         \unskip${}\Longrightarrow{}$\ignorespaces\fi}
 \def\implies{\ifmmode\Rightarrow \else
         \unskip${}\Rightarrow{}$\ignorespaces\fi}
 \def\iff{\ifmmode\Longleftrightarrow \else
         \unskip${}\Longleftrightarrow{}$\ignorespaces\fi}
\newtheorem{theorem}{Theorem}[section]
\newtheorem{lemma}[theorem]{Lemma}
\newtheorem{proposition}[theorem]{Proposition}
\newtheorem{definition}[theorem]{Definition}
\newtheorem{remark}[theorem]{Remark}
\newtheorem{remarks}[theorem]{Remarks}
 \let\epsilon\varepsilon
 \let\kappa=\varkappa
 \def\qed{\ifhmode\textqed\fi
       \ifmmode\ifinner\quad\qedsymbol\else\dispqed\fi\fi}
 \def\textqed{\unskip\nobreak\penalty50
        \hskip2em\hbox{}\nobreak\hfil\qedsymbol
        \parfillskip=0pt \finalhyphendemerits=0}
 \def\dispqed{\rlap{\qquad\qedsymbol}}
 \opn\dis{dis}
 \def\pnt{{\raise0.5mm\hbox{\large\bf.}}}
 \opn\Lex{Lex}
\begin{document}

\title {On a generalization of Roman domination with more legions}

\author {Fahimeh Khosh-Ahang}

\address{Fahimeh Khosh-Ahang Ghasr, Department of Mathematics, School of Science, Ilam University,
P.O.Box 69315-516, Ilam, Iran}
\email{fahime$_{-}$khosh@yahoo.com and f.khoshahang@ilam.ac.ir}

\dedicatory{ }

\begin{abstract}
In this note, we generalize the concepts of (perfect) Roman and Italian dominations to (perfect) strong Roman and Roman $k$-domination for arbitrary positive integer $k$. These generalizations cover some of previous ones. After some comparison of their domination numbers, as a first study of these concepts, we study the (perfect, strong, perfect strong) Roman $k$-domination numbers of complete bipartite graphs.
\end{abstract}

\thanks{}

\subjclass[2010]{05C69}


\keywords{Complete bipartite graph, Italian domination, Perfect domination, Roman domination}

\maketitle

\setcounter{tocdepth}{1}

\section{Introduction}
Roman domination is initiated after the incipient paper of Stewart (\cite{Stewart}) in 1999, even though it has some historical roots in some older papers too. In fact it is a strategy of defending the Roman Empire. It can be mentioned that \cite{Cokayane} is the first paper in which this defense strategy is studied in graph theory profoundly. After that a various types of generalizations are introduced and studied spreadly such as weak Roman domination \cite{Henning+hedetniemi}, perfect Roman domination \cite{Klostermeyer2}, Italian Domiantion \cite{Chellali}, perfect Italian domination \cite{perfect}, double Roman domination \cite{double} and etc. Also a couple of survays are written in this context (cf. \cite{Klostermeyer1}, \cite{Klostermeyer2}, \cite{Chellali2}, \cite{Chellali3} and \cite{Chellali1}).

In Roman domination problem, two legions are needed to defend any vertex, and the problem is supporting the vertices with no legions by their neigbors with two legions. This domination is called perfect if each vertex with no legions has exactly one neigbor with two legions. In double Roman domination problem, which is a kind of its generalization, at least three legions are needed to defend any vertex, and the problem is supporting the vertices with zero or one legions by its neigbors with at least two legions. So, it can be a natural generalization to ask about Roman $k$-domination problem. That is $k$ legions are needed to defend any vertex, and so considering the best ways to support the vertices with the number of legions less than $k/2$ with their neigbors is the main problem of this kind of domination. We can also bring a similar argument for Italian domination (Roman $\{2\}$-domination). In Italian domination although, as in Roman domination, two legions are needed to defend any vertex, but the problem is supporting the vertices with no legions by all of their neigbors. In this regard, generalizing these kinds of dominations for arbitrary positive integers $k$ makes it more useful. Because in different situations it may need more than $2$ or $3$ of something, say legions. For instance, in covid-19 pandemic, the shortages of hospital beds, medical oxygen, nurses and etc. in many countries are notable. So for example when we know that the safe amount of something in a city is $k$, arranging the available facilities in the cities by holding more facilities in major points and considering their transfer between neigbor cities, can help to pass from these shortages with less expenses.  Of course it is not that simple and it depends to other factors too.

 In this note, firstly in Section $2$, we generalize (perfect) Roman and Italian domination to (perfect)  strong Roman and Roman $k$-domination and bring some elementary results about their domination numbers and their comparision. Then in the next section, we gain these $k$-domination numbers for complete bipartite graph $K_{m,n}$ by discussion on the vlues of $m, n$ and $k$ explicitly, in Theorems \ref{thm1}, \ref{thm2}, \ref{thm3} and \ref{thm4}.

Throughout $k$ is a positive integer and $G=(V,E)$ is a finite connected simple graph unless otherwise is stated. For each vertex $u\in V$ we use the notion of $N_G(u)$ for the set of all neigbors of $u$ in $G$. Also $N_G[u]=\{u\}\cup N_G(u)$. We also use the notion $K_{m,n}$ for the complete bipartite graph in which the size of its partite sets is $m$ and $n$.

\section{Some results on Roman $k$-domination}
We begin our section by introducing the generalization of Roman domination for more number of legions as follows.
\begin{definition}\label{2.1}
Let $G=(V,E)$ be a finite simple graph, $k$ be a positive integer and $f: V\rightarrow \{0, 1, \dots , k\}$ be a map. For each vertex $u$ of $G$, $f(u)$ is called the weight of $u$ by $f$ or briefly weight of $u$ if there is no ambiguity about $f$. If we denote the set of vertices $u$ of $G$ with $f(u)=i$ by $V_i$, then we sometimes write $f=(V_0, V_1, \dots, V_k)$. The weight of $f$ is $w(f)=\sum_{u\in V}f(u)$.

 $f$ is called a (perfect) Roman $k$-dominating function if $f(u)=k$ for all isolated vertices $u$ of $G$, and also for all $u\in V$ with $f(u)<k/2$ we have $\sum_{v\in N_G[u]}f(v)\geq k$ ($\sum_{v\in N_G[u]}f(v)= k$).
$f$ is called a (perfect) strong Roman $k$-dominating function if  $f(u)=k$ for all isolated vertices $u$ of $G$, and also for all $u\in V$ with $f(u)<k/2$ we have $f(u)+\sum_{v\in N_G(u), f(v)>k/2}f(v)\geq k$ ($f(u)+\sum_{v\in N_G(u), f(v)>k/2}f(v)= k$). Also we define Roman $ k$-domination number of $G$ ($\gamma_k(G)$) respectively strong Roman $k$-domination number of $G$ ($\gamma_k^s(G)$), perfect Roman $k$-domination number of $G$ ($\gamma_k^p(G)$) and perfect strong Roman $k$-domination number of $G$ ($\gamma_k^{ps}(G)$) as follows: 
$$\gamma_k(G)=\min \{w(f) \ | \ f \text{ is a Roman } k \text{-dominating function of } G\},$$
$$\gamma^s_k(G)=\min \{w(f) \ | \ f \text{ is a strong Roman } k \text{-dominating function of } G\},$$
$$\gamma^p _k(G)=\min \{w(f) \ | \ f \text{ is a perfect Roman } k \text{-dominating function of } G\},$$
$$\gamma^{ps}_k(G)=\min \{w(f) \ | \ f \text{ is a perfect strong Roman } k \text{-dominating function of } G\}.$$
 Every   Roman $k$-dominating function (resp. perfect, strong and perfect strong Roman $k$-dominating function) $f$ with $w(f)=\gamma_k(G)$ (resp. $w(f)=\gamma_k^p(G)$, $w(f)=\gamma_k^s(G)$ and $w(f)=\gamma_k^{ps}(G)$) is called a $\gamma_k$-function (resp. $\gamma_k^p(G)$-function, $\gamma_k^s(G)$-function, $\gamma_k^{ps}(G)$-function) of $G$.
\end{definition} 
The following remarks immediately follows from Definition \ref{2.1}.
\begin{remarks}\label{2.2}
\begin{itemize}
\item[1)] It can be easily seen that strong Roman $k$-domination for $k=2$ and $k=3$ are respectively Roman domination and double Roman domination. Also, Roman $2$-domination is precisely Italian domination. 

\item[2)] If $f=(V_0, \dots, V_k)$, then $V=\bigcup_{i=0}^kV_i$ and so $w(f)=\sum_{i=1}^k i|V_i|$.

\item[3)] If $G=K_n$ is the complete graph with $n$ vertices, then by assigning $k$ to one vertex and zero to other vertices, we get to a (perfect, strong, perfect strong) Roman $k$-dominating function. So $k$ is the minimum value of $\gamma_k(G)$, $\gamma_k^p(G)$, $\gamma^s_k(G)$ and $\gamma_k^{ps}(G)$.

\item[4)] It is easily seen that every strong Roman $k$-dominating function is a Roman $k$-dominating function, every perfect Roman $k$-dominating function is a Roman $k$-dominating function and every perfect strong Roman $k$-dominating function is a strong Roman $k$-dominating function. 

\item[5)] If $G$ has $t$ connected components, then clearly, $tk$ is the minimum value of $\gamma_k(G), \gamma_k^p(G), \gamma_k^s(G)$ and $\gamma_k^{ps}(G)$

\item[6)]  It is clear that if $k$ is even, then by assigning the weight $k/2$ to all vertices and if $k$ is odd, then by assigning the weight $(k+1)/2$ to all vertices, one can get to a Roman $k$-dominating function which is also strong, perfect and perfect srtong. So
$$k\leq \gamma_k(G)\leq \gamma_k^p(G)\leq  \left\lbrace\begin{array}{l}
\frac{|V|k}{2}; \ \ \ k \text{ is even}, \\ 
\frac{|V|(k+1)}{2}; \ \ k \text{ is odd}.
\end{array}\right. $$

$$k\leq \gamma_k(G)\leq  \gamma_k^s(G)\leq \gamma_k^{ps}(G) \leq  \left\lbrace\begin{array}{l}
\frac{|V|k}{2}; \ \ \ k \text{ is even}, \\ 
\frac{|V|(k+1)}{2}; \ \ k \text{ is odd}.
\end{array}\right. $$

\item[7)] If $H$ is a spanning subgraph of $G$ (that is $G$ can be gained by adding edges to $H$), then certainly every (strong) Roman $k$-dominating function of $H$ is also a (strong) Roman $k$-dominating function of $G$. So, $\gamma_k(G) \leq \gamma_k(H)$ ($\gamma^s_k(G) \leq \gamma^s_k(H)$). In particular, if $G$ has a spanning subgraph $H$ such that $\gamma_k(H)=k$ ($\gamma^s_k(H)=k$), then we also have $\gamma_k(G)=k$ ($\gamma^s_k(G)=k$).

\item[8)] If $f$ is a strong Roman $k$-dominating function and $u$ is a vertex with $f(u)<k/2$, then there should exist at least one vertex $v\in N_G(u)$ with $f(v)>k/2$. If $f$ is also perfect, then there should exist exactly one vertex $v\in N_G(u)$ with $f(v)>k/2$.
\end{itemize}
\end{remarks}

In the next result, we use the technique of the proof of Lemma 1 in \cite{perfect} to show that the last equality in Remarks \ref{2.2}(6) for $\gamma_k^p(G)$ may occur for some bipartite graph.
\begin{proposition}
$$\sup \{\gamma_k^p(G) \ | \ G \text{ is a bipartite graph} \} =
\left\lbrace\begin{array}{l}
\frac{|V|k}{2}; \ \ \ k \text{ is even}, \\ 
\frac{|V|(k+1)}{2}; \ \ k \text{ is odd}.
\end{array}\right. $$
\end{proposition}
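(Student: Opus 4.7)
The plan is to verify both directions implicit in the statement: the supremum is bounded above by the displayed quantity, and this bound is attained by some bipartite graph. The upper bound is already contained in Remarks~\ref{2.2}(6), which for any bipartite $G$ (without isolated vertices) exhibits a perfect Roman $k$-dominating function of weight $|V|k/2$ (assigning $k/2$ everywhere) when $k$ is even, and of weight $|V|(k+1)/2$ (assigning $(k+1)/2$ everywhere) when $k$ is odd. It then remains to construct, for each parity of $k$, a bipartite witness realizing equality.

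For $k$ even, the witness is straightforward: I would take $G$ to be the disjoint union of $t$ copies of $K_2$, so $G$ is bipartite with $|V|=2t$ and $t$ connected components. Remarks~\ref{2.2}(5) yields $\gamma_k^p(G)\geq tk=|V|k/2$, while the vacuous assignment $f\equiv k/2$ from Remarks~\ref{2.2}(6) gives the reverse inequality, so equality holds.

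For $k$ odd the same construction falls short. Indeed, on a single $K_2$ the assignment $(f(u),f(v))=(k,0)$ is already a perfect Roman $k$-dominating function of weight $k$, so a disjoint union of $K_2$'s realizes only $|V|k/2<|V|(k+1)/2$. To close the gap I would adapt the technique from the proof of Lemma~1 in \cite{perfect}: design a bipartite graph $G$ in which any perfect Roman $k$-dominating function is forced to assign weight at least $(k+1)/2$ to every vertex, which then yields $\gamma_k^p(G)\geq |V|(k+1)/2$, and invoke Remarks~\ref{2.2}(6) for the opposite inequality.

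The main obstacle is the odd case: finding a bipartite graph rigid enough to forbid every reduction from the uniform $(k+1)/2$ assignment. The typical reductions---replacing an adjacent pair of weights $((k+1)/2,(k+1)/2)$ by $(0,k)$ or by $((k-1)/2,(k+1)/2)$---strictly decrease the total weight while preserving perfectness whenever the graph has a pendant edge or an adjacent pair of low-degree vertices. The witness graph must therefore have carefully controlled local structure, and one has to verify by a case analysis on how weights are distributed among neighbouring vertices that no such reduction can occur. This structural verification, imported from the perfect Italian setting in \cite{perfect}, is where the technical heart of the argument lies.
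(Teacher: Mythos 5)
Your even-$k$ half is correct and in fact simpler than what the paper does: already the single edge $K_{1,1}$ satisfies $\gamma_k^p(K_2)=k=|V|k/2$ (lower bound because each component must carry weight at least $k$, as in Remarks~\ref{2.2}(5); upper bound from the assignment $(k,0)$), so no elaborate witness — and in particular no disjoint union, which sits uneasily with the paper's standing convention that $G$ is connected — is needed there. The problem is the odd case, where your text stops at a description of what a proof would have to accomplish. You never exhibit the bipartite witness, and you explicitly defer ``the technical heart of the argument,'' namely the structural verification that no reduction from the uniform $(k+1)/2$ assignment is possible. That verification is precisely the content of the proposition, so as it stands the odd case is unproved. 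Moreover, the target you set yourself — a bipartite graph in which \emph{every} perfect Roman $k$-dominating function assigns at least $(k+1)/2$ to \emph{every} vertex — is stronger than what is needed and may well be unattainable: as you yourself observe, local reductions near low-degree vertices are hard to exclude everywhere, and you give no reason to believe such a rigid graph exists.

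The paper takes a softer route worth contrasting with your plan. Its witness is $K_{m,m^{\ell}}$ ($m\ge 3$, $\ell\ge 2$) with three pendant vertices attached to each vertex of the small part $A$. The pendants force $f(v)\ge \lceil k/2\rceil$ for every $v\in A$ (otherwise the perfect equality at $v$ or at one of its pendants fails), whence $\sum_{u\in A}f(u)\ge 3\lceil k/2\rceil>k$; this makes the perfect equality $\sum_{u\in N_G[v]}f(u)=k$ impossible for any $v$ in the large part $B$, forcing $f(v)\ge\lceil k/2\rceil$ there as well. Only the $B$-vertices are forced individually; each $A$-vertex together with its three pendants is merely forced to carry total weight at least $k$, but since $|B|=m^{\ell}$ dwarfs $|A|=m$, the average weight per vertex tends to $(k+1)/2$ as $m\to\infty$. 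Because the statement asserts a supremum, such an asymptotic family of witnesses suffices; exact equality on a single graph is not required. To repair your argument you would need either to write out this (or an equivalent) construction together with the forcing analysis, or to actually produce and verify the rigid graph you postulate; neither is present in your proposal.
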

\begin{proof}
Suppose that $H=K_{m,m^\ell}$ is a complete bipartite graph with $A, B$ as its partite sets such that $|A|=m\geq 3$ and $|B|=m^\ell$ for some $\ell\geq 2$. Also assume that $G=(V,E)$ is a graph which is obtained by attaching three pendant vertices $x_{1,v}, x_{2,v}, x_{3,v}$ to each vertex $v$ of $A$. So $G$ is a bipartite graph with $|V|=m^\ell+4m$. Assume that $f$ is an arbitrary perfect Roman $k$-dominating function of $G$.

 Let $k$ be even. If $f(v)<k/2$ for some $v\in A$, then for $i=1, 2, 3$,  $f(x_{i,v})\geq k/2$, because else we should have $k=f(x_{i,v})+f(v)<k/2+k/2=k$ which is a contradiction. Hence 
$$k=f(v)+\sum_{u\in N_G(v)}f(u)\geq f(v)+f(x_{1,v})+f(x_{2,v})+f(x_{3,v})\geq 3k/2,$$
which is a contradiction. So we should have $f(v)\geq k/2$ for all $v\in A$. Now if $f(x_{i,v})<k/2$ for some $i=1, 2, 3$, then $f(x_{i,v})+f(v)=k$ and else $$f(v)+\sum_{i=1}^3f(x_{i,v})\geq 4k/2>k.$$
Hence for each $v\in A$, we have $f(v)\geq k/2$ and  $f(v)+\sum_{i=1}^3f(x_{i,v})\geq k$. So since 
$|A|=m\geq 3$, for each vertex $v\in B$, we have $f(v)+\sum_{u\in N_G(v)}f(u)\geq \sum_{u\in A}f(u)\geq 3k/2>k$. Thus $f(v)\geq k/2$ for all vertices $v\in B$. Therefore
\begin{align*}
w(f)&=\sum_{v\in A}(f(v)+\sum_{i=1}^3f(x_{i,v}))+\sum_{v\in B}f(v)\\
&\geq |A|k+|B|k/2\\
&= mk+m^\ell k/2.
\end{align*}
On the other hand, $|V|=m^\ell +4m=m^\ell(1+4/m^{\ell -1})$. So, 
$$\frac{w(f)}{|V|}\geq \frac{m^\ell k(1/m^{\ell -1}+1/2)}{m^\ell(1+4/m^{\ell -1})}.$$
Hence for enough large $m$, we have $w(f)/|V|\geq k/2$ and so $w(f)\geq \frac{|V|k}{2}$. Therefore in view of Remarks \ref{2.2}(6), $\gamma_k^p(G)=\frac{|V|k}{2}$, for enough large $m$.

 Let $k$ be odd. By similar argument to above, we have
 \begin{align*}
 &f(v)+\sum_{i=1}^3f(x_{i,v})\geq k; \ \forall v\in A,\\
 &f(v)\geq (k+1)/2; \ \forall v\in B.
 \end{align*}
 
 Therefore
\begin{align*}
w(f)&=\sum_{v\in A}(f(v)+\sum_{i=1}^3f(x_{i,v}))+\sum_{v\in B}f(v)\\
&\geq |A|k+|B|(k+1)/2\\
&= mk+m^\ell (k+1)/2.
\end{align*}
On the other hand, $|V|=m^\ell+4m=m^\ell (1+4/m^{\ell -1})$. So, 
$$\frac{w(f)}{|V|}\geq \frac{m^\ell (k/m^{\ell -1}+(k+1)/2)}{m^\ell (1+4/m^{\ell -1})}.$$
Hence for enough large $m$, we have $w(f)/|V|\geq (k+1)/2$ and so $w(f)\geq \frac{|V|(k+1)}{2}$. Therefore in view of Remarks \ref{2.2}(6),
$\gamma_k^p(G)=\frac{|V|(k+1)}{2}$ for enough large $m$.
\end{proof}

The following lemma is needed for our next theorem.
\begin{lemma}(Compare \cite[Proposition 1]{double}.) \label{2.3}
Suppose that $f=(V_0, V_1, \dots , V_k)$ is a $\gamma^s_k$-function of $G$. Then we may assume that for each $0<i<k/2$, we have $|V_i|=0$. 
\end{lemma}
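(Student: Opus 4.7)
The plan is to show that whenever $f=(V_0,V_1,\ldots,V_k)$ is a $\gamma^s_k$-function and some class $V_i$ with $0<i<k/2$ is nonempty, we can replace $f$ by another $\gamma^s_k$-function $g$ with $|V_i(g)|<|V_i(f)|$ and $|V_j(g)|=|V_j(f)|$ for the remaining $j$ in the range $0<j<k/2$. Since the relevant class sizes are finite, iterating such replacements produces a $\gamma^s_k$-function in which every $V_i$ with $0<i<k/2$ is empty, which is what is asserted.

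Fix $u\in V_i$. Since $f(u)=i<k/2$, the strong Roman condition at $u$ reads
$$i+\sum_{v\in N_G(u),\,f(v)>k/2}f(v)\;\geq\;k,$$
so some neighbor $v$ of $u$ satisfies $f(v)>k/2$. I would then define $g$ by $g(u)=0$, $g(v)=\min\{f(v)+i,\,k\}$, and $g(w)=f(w)$ on every other vertex. To see that $g$ is still a strong Roman $k$-dominating function, note first that at $u$ itself the sum over neighbors with $g$-weight exceeding $k/2$ equals $g(v)+\sum_{w\in N_G(u)\setminus\{v\},\,f(w)>k/2}f(w)$, and this is $\geq k$ in both subcases $g(v)=f(v)+i$ and $g(v)=k$, by applying the displayed inequality. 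At any other vertex $u'$ with $g(u')<k/2$, the drop at $u$ is harmless because $f(u)<k/2$ meant $u$ was never counted in $u'$'s sum, while the increase (if any) at $v$ can only help.

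Comparing weights, $w(g)=w(f)$ if $f(v)+i\leq k$, and $w(g)=w(f)-(f(v)+i-k)<w(f)$ otherwise; the latter would contradict the minimality of $f$, so in fact $f(v)+i\leq k$ and $g$ is again a $\gamma^s_k$-function. Because $f(v)>k/2$ forces $g(v)\geq f(v)>k/2$, the counts $|V_j|$ with $0<j<k/2$ are preserved except that $|V_i|$ strictly decreases, which enables the iteration.

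The step I expect to be most delicate is the verification of the strong Roman condition at $u$ after setting $g(u)=0$: the argument relies on the observation that the weight $i$ transferred from $u$ to $v$ precisely compensates for the $i$ that $u$ was itself contributing on the left-hand side of its original strong Roman inequality, so that the revised sum over neighbors of weight $>k/2$ still attains $k$. Everything else (the effect at other vertices and the weight comparison) is bookkeeping that uses only $f(u)<k/2<f(v)\leq g(v)$.
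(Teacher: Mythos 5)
Your proof is correct and follows essentially the same strategy as the paper's: zero out a vertex $u$ with $0<f(u)=i<k/2$, transfer weight to a neighbor of weight exceeding $k/2$, verify the strong Roman condition is preserved, and invoke minimality of $w(f)$ to rule out a strict decrease. The only cosmetic difference is the amount transferred (you add $i$, capped at $k$; the paper adds the deficit $k-j$ where $j$ is the heavy-neighbor sum), which does not change the substance of the argument.
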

\begin{proof}
Suppose that $f=(V_0, V_1, \dots , V_k)$ is a $\gamma^s_k$-function of $G$ and $u\in V_i$ for some $0<i <k/2$. Then
 $f(u)+\sum_{v\in N_G(u), f(v)>k/2}f(v)\geq k$. Now if $\sum_{v\in N_G(u), f(v)>k/2}f(v)\geq k$, then we define $g(u)=0$ and $g(v)=f(v)$ for all $v\in V\setminus \{u\}$. It is obvious that $g$ is a strong Roman $k$-dominating function with $w(g)<w(f)$, which is a contradiction, since $f$ is a $\gamma^s_k$-function. Else, $\sum_{v\in N_G(u), f(v)>k/2}f(v)=j<k$. Then for all $v\in N(u)$ with $f(v)>k/2$, we have $f(v)\leq j <k$ and so $f(v)+k-j\leq k$. In this case suppose that $v_0\in N(u)$ with $f(v_0)>k/2$ (note that by Remarks \ref{2.2}(8) such $v_0$ exists). Then set $g(u)=0$, $g(v_0)=f(v_0)+k-j$ and $g(v)=f(v)$ for all $v\in V\setminus \{u, v_0\}$. Since $i+j\geq k$, we have $w(g)=w(f)-i+k-j\leq w(f)$. So since $f$ is a $\gamma^s_k$-function and $g$ is clearly a strong Roman $k$-dominating function, we should have $w(g)=w(f)$. By repeating this process for all $u$ with $0<f(u)<k/2$, we get to a $\gamma_k^s$-function of $G$ with the desired property.
\end{proof}

The next theorem gives some relations between $\gamma_k^s(G)$ and $\gamma_{k+1}^s(G)$.
\begin{theorem}\label{2.4}
For each $\gamma_k^s$-function $f=(V_0, \dots , V_k)$ of $G$ we have 
$$\gamma_{k+1}^s(G)\leq \gamma_k^s(G)+|V|-|V_0|\leq 2\gamma_k^s(G).$$
\end{theorem}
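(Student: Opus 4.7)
The plan is to produce a strong Roman $(k+1)$-dominating function $g$ from $f$ by incrementing each positive value by one and leaving zero values alone. Concretely, I define $g(u) = f(u) + 1$ if $u \notin V_0$ and $g(u) = 0$ if $u \in V_0$. Its total weight is
$$w(g) = \sum_{u \notin V_0}(f(u) + 1) = w(f) + |V| - |V_0| = \gamma_k^s(G) + |V| - |V_0|,$$
so once $g$ is shown to be a strong Roman $(k+1)$-dominating function, the left-hand inequality follows at once from $\gamma_{k+1}^s(G) \leq w(g)$.

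Verifying $g$ is the main work. For an isolated vertex $u$ we have $f(u) = k$, hence $g(u) = k+1$ as required. Suppose now $u$ is non-isolated with $g(u) < (k+1)/2$; by construction this forces $f(u) < k/2$, so the strong Roman $k$-condition on $f$ gives $\sum_{v \in N_G(u),\, f(v) > k/2} f(v) \geq k - f(u)$. The key observation is that, because $f(v)$ is an integer, $f(v) > k/2$ already implies $g(v) = f(v) + 1 > (k+1)/2$; hence every such $v$ still counts in the strong Roman $(k+1)$-sum for $g$. Thus
\begin{align*}
g(u) + \sum_{\substack{v \in N_G(u)\\ g(v) > (k+1)/2}} g(v)
&\geq g(u) + \sum_{\substack{v \in N_G(u)\\ f(v) > k/2}} (f(v) + 1)\\
&\geq g(u) + (k - f(u)) + N,
\end{align*}
where $N = |\{v \in N_G(u) : f(v) > k/2\}|$. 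If $f(u) > 0$, then $g(u) = f(u) + 1$ and the right-hand side is at least $(f(u)+1) + (k - f(u)) = k+1$; if $f(u) = 0$, Remark \ref{2.2}(8) guarantees $N \geq 1$, so we again obtain at least $k+1$.

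For the right-hand inequality I would just write $|V| - |V_0| = \sum_{i=1}^{k} |V_i| \leq \sum_{i=1}^{k} i|V_i| = w(f) = \gamma_k^s(G)$ (the first inequality holds because every coefficient is at least $1$) and add $\gamma_k^s(G)$ to both sides. The only delicate step in the whole argument is the edge case $f(u) = 0$ in the verification of $g$, where Remark \ref{2.2}(8) is needed to supply the missing $+1$; the rest is direct bookkeeping and makes no use of Lemma \ref{2.3}.
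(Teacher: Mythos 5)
Your proof is correct, and it reaches the same target function as the paper — add $1$ to every positive weight and leave the zeros alone — but by a cleaner route. The paper first invokes Lemma \ref{2.3} to replace $f$ by a $\gamma_k^s$-function with $V_i=\emptyset$ for $0<i<k/2$, so that the shift $V'_i=V_{i-1}$ only ever acts on weights $\geq k/2$, and it then treats $k$ even and $k$ odd separately. You dispense with the normalization entirely by observing that a vertex $u$ with $0<f(u)<k/2$ supplies its own missing unit, since $g(u)=f(u)+1$ turns the bound $f(u)+\sum f(v)\geq k$ into $g(u)+\sum(f(v)+1)\geq k+1$ with no help from the neighbors; Remark \ref{2.2}(8) is then needed only when $f(u)=0$, exactly as in the paper's own computation (where the step $\sum(f(v)+1)\geq\sum f(v)+1$ silently uses the nonemptiness of the index set). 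Your argument is also uniform in the parity of $k$, because the implications $g(u)<(k+1)/2\Rightarrow f(u)<k/2$ and $f(v)>k/2\Rightarrow g(v)>(k+1)/2$ hold for integers regardless of parity, and the possible enlargement of the index set $\{v: g(v)>(k+1)/2\}$ beyond $\{v: f(v)>k/2\}$ only adds nonnegative terms. The weight computation and the second inequality $|V|-|V_0|=\sum_{i=1}^k|V_i|\leq\sum_{i=1}^k i|V_i|=\gamma_k^s(G)$ agree with the paper. What the paper's detour through Lemma \ref{2.3} buys is a normal form for $\gamma_k^s$-functions that it reuses elsewhere; what your version buys is a shorter, parity-free verification that works for an arbitrary $\gamma_k^s$-function as literally stated in the theorem.
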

\begin{proof}
We consider the case that $k$ is even. When $k$ is odd, we can proceed in a similar manner by replacing $(k+1)/2$ in stead of $k/2$.

Suppose that $f=(V_0, \dots , V_k)$ is a $\gamma _k^s$-function of $G$. In view of Lemma \ref{2.3}, we may assume that $V_i=\emptyset$ for all $0< i <k/2$. So
 $$|V|-|V_0|=\sum_{i=k/2}^k |V_i|, w(f)=\sum_{i=k/2}^ki|V_i|.$$
  Now set $V'_0=V_0$, $V'_i=\emptyset$ for all $1\leq i \leq k/2$, and $V'_i=V_{i-1}$ for all $k/2<i\leq k+1$. Then $f'=(V'_0, \dots , V'_{k+1})$ is a strong Roman $(k+1)$-dominating function of $G$. Because if $u$ is a vertex with $f'(u)<(k+1)/2$, then $u\in V'_i$ for some $0\leq i \leq k/2$. Since $V'_i=\emptyset$ for $1\leq i\leq k/2$, we should have $f'(u)=f(u)=0$. Now since $f$ is a strong Roman $k$-dominating function, we have by Remarks \ref{2.2}(8),
 \begin{align*}
 f'(u)+\sum_{v\in N_G(u), f'(v)>(k+1)/2}f'(v)&=f(u)+\sum_{v\in N_G(u), f'(v)>k/2}f'(v)\\
 &=f(u)+\sum_{v\in N_G(u), f(v)>k/2-1}(f(v)+1)\\
 &\geq f(u)+\sum_{v\in N_G(u), f(v)>k/2}(f(v)+1)\\
 &\geq f(u)+\sum_{v\in N_G(u), f(v)>k/2}f(v)+1\\
 &\geq k+1.
 \end{align*}
 
  Therefore
\begin{align*}
\gamma_{k+1}^s(G)&\leq w(f')\\
&=\sum_{i=1}^{k+1}i|V'_i|\\
&=\sum_{i=k/2+1}^{k+1}i|V_{i-1}|\\
&=\sum_{i=k/2+1}^{k+1}(i-1)|V_{i-1}|+\sum_{i=k/2+1}^{k+1}|V_{i-1}|\\
&=\sum_{i=k/2}^{k}i|V_i|+\sum_{i=k/2}^{k}|V_i|\\
&=w(f)+|V|-|V_0|\\
&=\gamma _k^s(G)+|V|-|V_0|.
\end{align*} 
Also since 
$$|V|-|V_0|=\sum_{i=1}^k|V_i| \leq \sum_{i=1}^ki|V_i|=\gamma_k^s(G),$$
the second inequality also holds.
\end{proof}

Let $f=(V_0,V_1, V_2)$ be a $\gamma_2^s$-function of $G$. Then Theorem \ref{2.4} ensures that
\begin{align*}
\gamma _3^s(G)&\leq \gamma_2^s(G)+|V|-|V_0|\\
&=w(f)+|V|-|V_0|\\
&=(|V_1|+2|V_2|)+(|V_0|+|V_1|+|V_2|)-|V_0|\\
&=2|V_1|+3|V_2|\leq 2\gamma _2^s(G),
\end{align*}
which has been gained in \cite[Proposition 2 and Corollary 3]{double}.

Furthermore, note that $\gamma_{k+1}^s(G)<2\gamma_k^s(G)$ unless $k=2$, which is explained in \cite[Corollary 3]{double}. Because if $\gamma_{k+1}^s(G)=2\gamma_k^s(G)$, then
$$\gamma_{k+1}^s(G)=2\gamma_k^s(G)=\gamma_k^s(G)+\sum_{i=1}^ki|V_i|.$$
Also by Theorem \ref{2.4}, we have 
$$\gamma_{k+1}^s(G)\leq \gamma_k^s(G)+|V|-|V_0|=\gamma_k^s(G)+\sum_{i=1}^k|V_i|,$$
and hence
$$\gamma_k^s(G)+\sum_{i=1}^ki|V_i|\leq \gamma_k^s(G)+\sum_{i=1}^k|V_i|,$$
which implies $\sum_{i=1}^k (i-1)|V_i|\leq 0$. So $|V_i|=0$ for all $i=2, \dots , k$. Also by Remarks \ref{2.2}(8), $|V_i|=0$ for all $i<k/2$. Hence the only case is $k=2$ which implies that $|V|=|V_1|$ and else $|V|=0$.

We end this section by the following result which compares $\gamma_k(G)$ and $\gamma_{k+1}(G)$ together.

\begin{proposition}
For each $k\in \mathbb{N}$ and each $\gamma_k$-function $f=(V_0, \dots , V_k)$ of $G$ we have 
$$\gamma_{k+1}(G)\leq \gamma_k(G)+\sum_{i=0}^r|V_i|\leq 2\gamma_k(G)+|V_0|,$$
where $r=k/2$ if $k$ is even and $r=(k-1)/2$ if $k$ is odd.
\end{proposition}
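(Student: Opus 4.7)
The plan is to mirror the construction used in the proof of Theorem~\ref{2.4}, suitably adapted to the (non-strong) Roman setting. Starting from a $\gamma_k$-function $f=(V_0,\dots,V_k)$, I would define
$$f'(v)=\begin{cases} f(v)+1 & \text{if } v\in\bigcup_{i=0}^r V_i,\\ f(v) & \text{otherwise,}\end{cases}$$
so that every vertex of weight at most $r$ is bumped up by one while the rest is left unchanged. Counting weights gives immediately $w(f')=w(f)+\sum_{i=0}^r|V_i|$, so once I verify that $f'$ is a Roman $(k+1)$-dominating function I get the first inequality $\gamma_{k+1}(G)\le\gamma_k(G)+\sum_{i=0}^r|V_i|$.

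To verify the Roman $(k+1)$-domination condition, I would observe that, because $f'(u)$ is a nonnegative integer, the inequality $f'(u)<(k+1)/2$ is equivalent in both parities to $f'(u)\le r$. Then a direct case split on whether $u\in\bigcup_{i=0}^r V_i$ or $u\in\bigcup_{i=r+1}^k V_i$ shows that $f'(u)\le r$ happens precisely when $f(u)\le r-1$. Since $r-1<k/2$ in both parities, the Roman $k$-domination condition for $f$ supplies $\sum_{v\in N_G[u]}f(v)\ge k$. Finally, $u\in N_G[u]$ and $f(u)\le r$, so $f'(u)=f(u)+1$; the remaining neighbors contribute at least as much under $f'$ as under $f$. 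Hence
$$\sum_{v\in N_G[u]}f'(v)\ \ge\ \sum_{v\in N_G[u]}f(v)+1\ \ge\ k+1,$$
as required.

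For the second inequality I only need the elementary bound
$$\sum_{i=0}^r|V_i|-|V_0|\ =\ \sum_{i=1}^r|V_i|\ \le\ \sum_{i=1}^r i|V_i|\ \le\ \sum_{i=1}^k i|V_i|\ =\ \gamma_k(G),$$
which rearranges to $\gamma_k(G)+\sum_{i=0}^r|V_i|\le 2\gamma_k(G)+|V_0|$. The only delicate step is the parity bookkeeping that turns $f'(u)<(k+1)/2$ into the uniform statement $f(u)\le r-1$; once this reduction is nailed down, the single extra unit of weight contributed by $u$ itself in its own closed neighborhood is enough to promote the Roman $k$-domination of $f$ to Roman $(k+1)$-domination of $f'$.
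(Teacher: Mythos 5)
Your construction---bumping by one the weight of every vertex $u$ with $f(u)\le r$, and using the vertex's own $+1$ inside $N_G[u]$ to lift the closed-neighborhood sum from $k$ to $k+1$---is exactly the paper's proof, merely with the two parities handled uniformly through $r$ rather than in separate cases. The weight count $w(f')=w(f)+\sum_{i=0}^r|V_i|$ and the elementary bound $\sum_{i=1}^r|V_i|\le\sum_{i=1}^k i|V_i|=\gamma_k(G)$ for the second inequality also coincide with the paper's, so the argument is correct.
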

\begin{proof}
Suppose that  $f=(V_0, \dots , V_k)$ is a $\gamma_k$-function. 

Let $k$ be even. If $f(u)=i$ for some $0\leq i \leq k/2$, then we set $f'(u)=i+1$, else we set $f'(u)=f(u)$. Note that $f'=(\emptyset, V_0, V_1, \dots, V_{k/2-1}, V_{k/2}\cup V_{k/2+1}, V_{k/2+2}, \dots, V_k, \emptyset )$. Hence if $f'(u)<(k+1)/2$, then $f(u)<k/2$. Now since $f$ is a $\gamma_k$-function, 
$$f'(u)-1+\sum_{v\in N_G(u)}f'(v)=f(u)+\sum_{v\in N_G(u)}f'(v)\geq f(u)+\sum_{v\in N_G(u)}f(v)\geq k.$$
So, $f'(u)+\sum_{v\in N_G(u)}f'(v)\geq k+1$. This shows that $f'$ is a Roman $(k+1)$-dominating function. So
\begin{align*}
\gamma_{k+1}(G)&\leq w(f')\\
&=\sum_{i=0}^{k/2}(i+1)|V_i|+\sum_{i=k/2+1}^ki|V_i|\\
&=\sum_{i=0}^k i|V_i|+\sum_{i=0}^{k/2}|V_i|\\
&=\gamma_k(G)+\sum_{i=0}^{k/2}|V_i|\\
&\leq \gamma_k(G)+|V_0|+\sum_{i=1}^ki|V_i|\\
&=2\gamma_k(G)+|V_0|.
\end{align*}
Let $k$ be odd. If $f(u)=i$ for some $0\leq i \leq (k-1)/2$, then we set $f'(u)=i+1$, else we set $f'(u)=f(u)$. Note that $f'=(\emptyset, V_0, V_1, \dots, V_{(k-3)/2}, V_{(k-1)/2}\cup V_{(k+1)/2}, V_{(k+3)/2}, \dots, V_k, \emptyset )$. By similar argument to the case ``$k$ is even" the proof will be completed.
\end{proof}

\section{Roman $k$-domination of complete bipartite graphs}
In this section, we consider the complete bipartite graph $K_{m,n}$ with vertex set $V$ such that $A=\{u_1, \dots , u_m\}$ and $B=\{v_1, \dots , v_n\}$ are two partite sets. It is clear that by assigning $k$ to one vertex in each partite set and zero to other vertices, one can get to a (perfect, strong, perfect strong) Roman $k$-dominating function. So, in view of Remarks \ref{2.2}(6), we have
$$k\leq \gamma_k(K_{m,n})\leq \gamma_k^p(K_{m,n})\leq 2k,$$
and
$$k\leq \gamma_k(K_{m,n})\leq  \gamma_k^s(K_{m,n})\leq \gamma_k^{ps}(K_{m,n}) \leq 2k.$$
In the first main result of this section we characterize the perfect strong Roman $k$-domination number of complete bipartite graph in all different circumstances as follows.
\begin{theorem}\label{thm1}
$$\gamma_k^{ps}(K_{m,n})=\left\lbrace \begin{array}{ll}
k; & m=1, n\geq 1 \\ 
3k/2; & k \textrm{ is even}, m=2, n\geq 2 \\ 
2k; & \textrm{otherwise} 
\end{array} \right.$$
\end{theorem}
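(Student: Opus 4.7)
Plan: Assume $m \leq n$ by the symmetry $K_{m,n}\cong K_{n,m}$. For the upper bounds I would exhibit three explicit perfect strong Roman $k$-dominating functions: $f(u_1)=k, f(v_j)=0$ for all $j$ (weight $k$, valid when $m=1$); $f(u_1)=k, f(u_2)=k/2, f(v_j)=0$ (weight $3k/2$, valid when $m=2, n\geq 2$ and $k$ is even — the vertex $u_2$ has weight exactly $k/2$ so does not count towards the ``$>k/2$'' sum, leaving $u_1$ as the unique strong neighbor of each $v_j$ with $0+k=k$); and $f(u_1)=f(v_1)=k$ with all other weights $0$ (weight $2k$, valid in all remaining cases). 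The lower bound $\gamma_k^{ps}\geq k$ when $m=1$ is immediate from Remarks \ref{2.2}(6).

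For the remaining two lower bounds, write $a_i=f(u_i), b_j=f(v_j)$ and partition each partite set into $L=\{\text{weight}<k/2\}$, $E=\{\text{weight}=k/2\}$, $S=\{\text{weight}>k/2\}$, noting $E_A=E_B=\emptyset$ when $k$ is odd. The key lemma driving the argument is: \emph{if $L_A\neq\emptyset$ then $|S_B|=1$, and every $i\in L_A$ has weight $a_i=k-b_{j^\star}$ where $v_{j^\star}$ is the unique element of $S_B$}. Indeed, for any $i\in L_A$ the perfect-strong condition reads $\sum_{j\in S_B}b_j=k-a_i$; two elements of $S_B$ would contribute $\geq k+1>k$, while the sum being strictly positive forces $|S_B|\geq 1$. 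A symmetric statement holds after swapping $A$ and $B$.

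For $k$ even and $m=2, n\geq 2$, I would split on which of $L_A, L_B$ are empty. If $L_A=L_B=\emptyset$, then $a_1+a_2\geq k$ and $\sum_j b_j\geq nk/2\geq k$, so $w\geq 2k$. If $L_A=\emptyset$ but $L_B\neq\emptyset$, the lemma forces $|S_A|=1$, so WLOG $a_1>k/2, a_2=k/2$ and $b_j=k-a_1$ for $j\in L_B$; a linear calculation in $a_1$ and in the sizes $|L_B|, |E_B|, |S_B|$ gives $w\geq 3k/2$ (with equality in the boundary $n=2, |L_B|=2, a_1=k$). If $L_A\neq\emptyset$, then $|S_B|=1$; the subcase with both $u_1,u_2\in L_A$ forces $L_B=\emptyset$ (otherwise $S_A=\emptyset$ would require $b_j=k$ for $j\in L_B$) and yields $w=2k-b_1+(n-1)(k/2)\geq 3k/2$, and the subcase with exactly one of the $u_i$ in $L_A$ is handled analogously.

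For the ``otherwise'' case ($m\geq 3$, or $m=2$ with $k$ odd), the core computation is when both $L_A, L_B\neq\emptyset$. The lemma then forces $|S_A|=|S_B|=1$; letting $u_1\in S_A, v_1\in S_B$, $p=|L_A|\geq 1$, $q=|L_B|\geq 1$, and unfolding the formulas for the weights on $L_A, E_A, L_B, E_B$ gives
\[
w=a_1(1-q)+b_1(1-p)+(p+q)k+(m+n-2-p-q)(k/2),
\]
a linear function of $a_1, b_1$ with nonpositive coefficients, hence minimized at $a_1=b_1=k$; the resulting bound $w\geq 2k+(m+n-2-p-q)(k/2)\geq 2k$ follows from $p\leq m-1, q\leq n-1$. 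The remaining subcases with $L_A=\emptyset$ or $L_B=\emptyset$ reduce quickly: when $m\geq 3$ the side with no low vertices contributes $\geq mk/2\geq 3k/2$; when $m=2$ and $k$ is odd, $L_A=\emptyset$ forces both $a_i\geq (k+1)/2$, so the lemma rules out $L_B\neq\emptyset$, and then $L_B=\emptyset$ gives $w\geq 2(k+1)>2k$. The main obstacle throughout is the sheer number of subcases, but the structural lemma makes it tractable by reducing each one to a linear optimization in one or two variables.
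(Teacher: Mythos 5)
Your strategy is essentially the paper's: split according to which partite sets contain a vertex of weight $<k/2$, and exploit the fact that in a \emph{perfect strong} function each such vertex has exactly one neighbour of weight $>k/2$, with the exact equality $a_i+b_{j_0}=k$ (this is Remarks \ref{2.2}(8) plus perfectness — your ``key lemma''). The $L/E/S$ bookkeeping and the linear optimization are a more systematic packaging of the same three cases the paper runs through. One genuine improvement: your witness for the $3k/2$ upper bound, $f(u_1)=k$, $f(u_2)=k/2$, $f(v_j)=0$, is correct, whereas the construction in the paper ($f(u_1)=f(u_2)=f(v_1)=k/2$ and $f(v_j)=0$ for $j\geq 2$) does not actually satisfy the strong condition, since the vertices $v_j$ with $j\geq 2$ then have no neighbour of weight strictly greater than $k/2$; your function is the one that should be used.

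There is one concrete gap in your lower bound for the ``otherwise'' case. In the subcase $m\geq 3$ with $L_A=\emptyset$ and $L_B\neq\emptyset$ (or symmetrically), you only assert that the side with no low vertices contributes $\geq mk/2\geq 3k/2$; but the target is $2k$, and $3k/2<2k$ when $m=3$, so this alone does not finish the subcase. It is easily repaired by combining the two sides exactly as in your main computation: the lemma gives $|S_A|=1$, so $\sum_i a_i=a_1+(m-1)\tfrac{k}{2}$ with $a_1>k/2$, and any $v_j\in L_B$ satisfies $b_j=k-a_1$, whence
$w\geq a_1+(m-1)\tfrac{k}{2}+(k-a_1)=k+(m-1)\tfrac{k}{2}\geq 2k$ for $m\geq 3$
(and symmetrically using $n\geq m\geq 3$ when $L_B=\emptyset$, $L_A\neq\emptyset$). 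With that one line added, the case analysis is complete and the proof is correct.
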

\begin{proof}
If $m=1$ and $n\geq 1$, then by setting $f(u_1)=k$ and $f(v_i)=0$ for all $1\leq i \leq n$, $f$ will be a perfect strong Roman $k$-dominating function. So $\gamma_k^{ps}(K_{1,n})=k$. 

 Now suppose that $m,n\geq 2$ and $f$ is a perfect strong Roman $k$-dominating function. The following cases may occur:

Case I. $f(u)\geq k/2$ for all $u\in V$. Then since $|V|\geq 4$, 
$$w(f)\geq 2k.$$

Case II. $f(u)\geq k/2$ for all $u\in A$ and $f(v_1)<k/2$ for some $v_1\in B$ (or similarly $f(v)\geq k/2$ for all $v\in B$ and $f(u_1)<k/2$ for some $u_1\in A$).
By Remarks \ref{2.2}(8), $k$ is an even number and there should exist exactly one vertex, say $u_1\in A$, with $f(u_1)>k/2$ and so $f(u_i)=k/2$ for all $2\leq i \leq m$. Then 
$$w(f)\geq (f(v_1)+f(u_1))+\sum_{i=2}^mf(u_i)=k+\sum_{i=2}^mf(u_i).$$
Hence, 
$$w(f)\geq \left\lbrace\begin{array}{ll}
3k/2; & m=2, k \textrm{ is even} \\ 
2k; & m>2, k \textrm{ is even}
\end{array} \right.$$ 

Case III. $f(u_1)<k/2$ and $f(v_1)<k/2$ for some $u_1\in A$ and $v_1\in B$. Then by Remarks \ref{2.2}(8), there should exist $u_2\in A$ and $v_2\in B$ such that $f(u_2)>k/2$ and $f(v_2)>k/2$. Then
$$w(f)\geq (f(u_1)+f(v_2)) +(f(v_1)+f(u_2))=2k.$$

Now if $m=2$ and $k$ is an even number, then by setting $f(u_1)=f(u_2)=f(v_1)=k/2$  and $f(v_i)=0$ for all $2\leq i \leq n$ we get to a perfect strong Roman $k$-dominating function. This completes the proof.
\end{proof}

The following result, determines the strong Roman $k$-domination number of complete bipartite graphs explicitly.
\begin{theorem}\label{thm2}
$$\gamma_k^s(K_{m,n})=\left\lbrace \begin{array}{ll}
k; & m=1, n\geq 1 \\ 
3; & k=2, m=2, n\geq 2 \\
k+1; &  k \textrm{ is odd, } m=2, n\geq 2 \\
k+2; & 2<k \textrm{ is even}, m=2, n\geq 2 \\
\frac{3k+3}{2}; & 1<k \textrm{ is odd}, m=3, n\geq 3 \\  
\frac{3k+4}{2}; & 2<k \textrm{ is even}, m=3, n\geq 3 \\
2k; & \textrm{ else.}
\end{array} \right.$$
\end{theorem}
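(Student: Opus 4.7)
The plan is to follow the three-case split used in Theorem~\ref{thm1} but with the weaker (non-perfect) strong condition, and then perform a finer analysis for $m\in\{2,3\}$ where the answer drops below $2k$. First I would handle $m=1$: setting $f(u_1)=k$ and $f(v_j)=0$ gives a strong Roman $k$-dominating function of weight $k$, and Remarks~\ref{2.2}(6) supplies the matching lower bound. Thereafter I assume $m,n\geq 2$, so $\gamma_k^s(K_{m,n})\leq 2k$ from the construction stated just before the theorem.

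Now let $f$ be any strong Roman $k$-dominating function. I would split on whether both partite sets contain a vertex of weight $<k/2$. If they do (Case III), the argument from Theorem~\ref{thm1} applies verbatim: choosing such $u_1\in A$ and $v_1\in B$ and invoking Remarks~\ref{2.2}(8), the strong condition at $u_1$ forces $\sum_{v\in B,\,f(v)>k/2}f(v)\geq k-f(u_1)$, the analogous condition at $v_1$ gives $\sum_{u\in A,\,f(u)>k/2}f(u)\geq k-f(v_1)$, and adding yields $w(f)\geq 2k$. Otherwise (Case II), WLOG all $u\in A$ satisfy $f(u)\geq k/2$; set $S=\{u\in A:f(u)>k/2\}$ and $T=A\setminus S$.

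For $k$ odd, integrality forces $T=\emptyset$, so $w(f)\geq\sum_{u\in A}f(u)\geq m(k+1)/2$; this gives $k+1$ for $m=2$, $(3k+3)/2$ for $m=3$, and exceeds $2k$ for $m\geq 4$ (so Case III then gives the smaller bound $2k$). For $k$ even, I would use the two constraints $f(u)\geq k/2+1$ for $u\in S$ together with $\sum_{u\in S}f(u)\geq k$ coming from a $v_*\in B$ with $f(v_*)<k/2$, and minimize over $|S|\in\{1,\ldots,m\}$ and $|T|=m-|S|$; since the per-vertex lower bound in $T$ is $k/2$, the total is at least $\max(|S|(k/2+1),\,k)+|T|k/2$. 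For $m=2$ this gives $\min(k+2,\,3k/2)$, equal to $3$ when $k=2$ and to $k+2$ when $k\geq 4$. For $m=3$ the optimum $(k/2+1)+(k/2+1)+k/2=(3k+4)/2$ is achieved at $|S|=2$ for $k\geq 4$, while for $k=2$ the choice $|S|=1$ gives only $2k=4$. For $m\geq 4$ the $|T|k/2$ contribution already pushes the bound to $\geq 2k$.

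Matching constructions are then easy to write down: e.g.\ $f(u_1)=k,\,f(u_2)=1,\,f(v_j)=0$ for $k=2,\,m=2$; $f(u_1)=f(u_2)=(k+1)/2,\,f(v_j)=0$ for $m=2$, $k$ odd; and $f(u_1)=f(u_2)=k/2+1,\,f(u_3)=k/2,\,f(v_j)=0$ for $m=3$, $k>2$ even. The main obstacle will be the bookkeeping in the even case of Case II: the two lower bounds on $\sum_{u\in S}f(u)$ swap dominance as $|S|$ varies, so I will need to enumerate $|S|\in\{1,\ldots,m\}$ carefully and verify that the overall minimum matches the stated values and that the crossover thresholds at $k=2$ and $k=4$ are correctly captured.
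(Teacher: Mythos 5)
Your proposal is correct and follows essentially the same route as the paper: a case split on which partite set(s) contain a vertex of weight below $k/2$, the summation trick giving $w(f)\geq 2k$ when both do, and a count of the heavy vertices of $A$ otherwise, with the same extremal functions. Your $S/T$ bookkeeping with the bound $\max(|S|(k/2+1),k)+|T|k/2$ minimized over $|S|$ is just a compact rewriting of the paper's hand-enumerated sub-cases (its ``$f(u_2)=k/2$ versus $f(u_2)>k/2$'' distinctions in parts B and E).
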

\begin{proof}
We separate the proof in different situations as follows:

\textbf{A.  ${\bf m=1}$ and ${\bf n\geq 1}$.} 

By assigning $k$ to the vertex of $A$ and zero to other vertices we get to a strong Roman $k$-dominating function. So $\gamma_k^s (K_{1,n})=k$.  

Now suppose that  $m,n\geq 2$ and $f$ is a strong Roman $k$-dominating function.

\textbf{B.  ${\bf k}$ is even, ${\bf m=2}$ and ${\bf n\geq 2}$.}

Case I. $f(u)\geq k/2$ for all $u\in V$. Then since $|V|\geq 4$, 
$$w(f)\geq 2k.$$

Case II. $f(u)\geq k/2$ for all $u\in A$ and $f(v_1)<k/2$ for some $v_1\in B$ (or similarly $f(v)\geq k/2$ for all $v\in B$ and $f(u_1)<k/2$ for some $u_1\in A$).
By Remarks \ref{2.2}(8),  there should exist at least one vertex, say $u_1\in A$ with $f(u_1)>k/2$. Now if $f(u_2)=k/2$, then 
$$w(f)\geq (f(v_1)+f(u_1))+f(u_2)\geq k+k/2=3k/2.$$
Else, $f(u_2)>k/2$. So
$$w(f)\geq f(v_1)+f(u_1)+f(u_2)\geq f(u_1)+f(u_2)\geq 2(k/2+1)=k+2.$$

Case III. $f(u_1)<k/2$ and $f(v_1)<k/2$ for some $u_1\in A$ and $v_1\in B$. Then 
$$w(f)\geq (f(u_1)+\sum_{v\in B, f(v)>k/2}f(v)) + (f(v_1)+\sum_{u\in A, f(u)>k/2}f(u))\geq 2k.$$

Thus $\gamma_k^s(K_{2,n})\geq \min\{3k/2, k+2\}$. By setting $f(u_1)=2, f(u_2)=1, f(v_i)=0$, we get to 
$$\gamma_2^s(K_{2,n})=3.$$
 Also by setting $f(u_1)=f(u_2)=k/2+1, f(v_i)=0$ we get to 
$$\gamma_k^s(K_{2,n})=k+2,$$
 for even integers $k$ with $k>2$.

\textbf{C. ${\bf k}$ is odd, ${\bf m=2}$ and ${\bf n\geq 2}$.} 

Case I. $f(u)\geq (k+1)/2$ for all $u\in A$ or $f(v)\geq (k+1)/2$ for all $v\in B$. Then since $n\geq m=2$,
$$w(f)\geq k+1.$$

Case II.  $f(u_1)<k/2$ and $f(v_1)<k/2$ for some $u_1\in A$ and $v_1\in B$. Then in view of Remarks \ref{2.2}(8), we should have $f(u_2)\geq (k+1)/2$. So
\begin{align*}
w(f)&\geq f(u_2)+(f(u_1)+\sum _{v\in B, f(v)>k/2} f(v))\\
&\geq (k+1)/2+k\\
&=(3k+1)/2\\
&\geq k+1.
\end{align*}

The above cases show that $\gamma_k^s(K_{2,n})\geq k+1$. By assigning $(k+1)/2$ to vertices of $A$ and zero to vertices of $B$, we get to a strong Roman $k$-dominating function of $K_{2,n}$ with weight $k+1$. Hence we have
$$\gamma_k^s(K_{2,n})=k+1.$$

\textbf{D. ${\bf k}$ is odd, ${\bf m=3}$ and ${\bf n\geq 3}$.} 

Case I.  $f(u)\geq (k+1)/2$ for all $u\in A$ or $f(v)\geq (k+1)/2$ for all $v\in B$. Then since $n\geq m=3$, 
$$w(f)\geq (3k+3)/2.$$

Case II.  $f(u_1)<k/2$ and $f(v_1)< k/2$ for some $u_1\in A$ and $v_1\in B$. Then 
$$w(f)\geq (f(u_1)+\sum_{v\in B,f(v)>k/2}f(v))+(f(v_1)+\sum _{u\in A, f(u)>k/2}f(u))\geq 2k.$$
Hence $\gamma_k^s(K_{3,n})\geq (3k+3)/2$, for odd integers $k>1$ and $\gamma_k^s(K_{3,n})\geq 2k$ for $k=1$. Now by assigning $(k+1)/2$ to vertices of $A$ and zero to vertices of $B$, we get to a strong Roman $k$-dominating function with weight $(3k+3)/2$. Thus
$$\gamma_k^s(K_{3,n})=\frac{3k+3}{2},$$
for $k>1$ and 
$$\gamma_1^s(K_{3,n})=2.$$

\textbf{E. ${\bf k}$ is even, ${\bf m=3}$ and ${\bf n\geq 3}$.}

Case I.  $f(u)\geq k/2$ for all $u\in V$. Then since $|V|\geq 6$, 
$$w(f)\geq 3k.$$

Case II. $f(u_1)<k/2$ and $f(v_1)<k/2$ for some $u_1\in A$ and $v_1\in B$. Then we have
$$w(f)\geq (f(u_1)+\sum_{v\in B, f(v)>k/2}f(v))+(f(v_1)+\sum_{u\in A,f(u)>k/2}f(u))\geq 2k.$$

Case III. $f(u_1)<k/2$ for some $u_1\in A$ and $f(v)\geq k/2$ for all $v\in B$ (or similarly $f(v_1)<k/2$ for some $v_1\in B$ and $f(u)\geq k/2$ for all $u\in A$). Then, by Remarks \ref{2.2}(8) we should have $f(v_1)\geq k/2+1$ for some $v_1\in B$. Now if there are two vertices $v_1, v_2$ in $B$ with $f(v_1), f(v_2)\geq k/2+1$, then 
$$w(f)\geq f(v_1)+f(v_2)+f(v_3)\geq (k/2+1)+(k/2+1)+k/2=\frac{3k+4}{2}.$$
Else, $f(u_1)+f(v_1)\geq k$ and $f(v)=k/2$ for all $v\in B\setminus \{v_1\}$. Hence
$$w(f)\geq (f(u_1)+f(v_1))+f(v_2)+f(v_3)\geq k+k/2+k/2=2k.$$
Therefore $\gamma_k^s(K_{3,n})\geq \min \{\frac{3k+4}{2}, 2k\}$. By setting $f(u_1)=f(u_2)=k/2+1, f(u_3)=k/2, f(v_i)=0$ for all $1\leq i \leq n$, we get to a strong Roman $k$-dominating function $f$ with $w(f)=(3k+4)/2$. Thus
$$\gamma_2^s(K_{3,n})=4, \gamma_k^s(K_{3,n})=\frac{3k+4}{2}.$$

\textbf{F. ${\bf n\geq m\geq 4}$.}

We proceed when $k$ is an even number. The result can be gained similarly for odd integers $k$. In view of Remarks \ref{2.2}(8) the following cases may occur.

Case I.  $f(u)\geq k/2$ for all $u\in A$ or $f(v)\geq k/2$ for all $v\in B$. Then since $n\geq m\geq 4$, 
$$w(f)\geq 2k.$$

Case II.  $f(u_1)<k/2$ and $f(v_1)< k/2$ for some $u_1\in A$ and $v_1\in B$. Now, if $f(u_2), f(u_3)\geq k/2$ for some $u_2, u_3\in A$ (or similarly $f(v_2), f(v_3)\geq k/2$ for some $v_2, v_3\in B$), then 
$$w(f)\geq (f(u_1)+\sum_{v\in B,f(v)>k/2}f(v))+f(u_2)+f(u_3)\geq k+k/2+k/2=2k.$$

Case III. $f(u_1)\geq k/2, f(v_1)\geq k/2$ for some $u_1\in A$ and $v_1\in B$ and $f(u)<k/2$ for all $u\in V\setminus\{u_1, v_1\}$. Then by Remarks \ref{2.2}(8) we should have $f(u_1), f(v_1)>k/2$. Thus
$$f(u_2)+f(v_1)\geq k, f(u_3)+f(v_1)\geq k,f(u_4)+f(v_1)\geq k,$$
$$f(v_2)+f(u_1)\geq k, f(v_3)+f(u_1)\geq k, f(v_4)+f(u_1)\geq k.$$
Hence by summing the above inequalities, we have $3w(f)\geq 6k$. So
$w(f)\geq 2k$, and hence 
$$\gamma_k^s(K_{m,n})=2k.$$

\end{proof}

The third result of this section, qualifies the perfect Roman $k$-domination number of complete bipartite graphs in various situations as follows.
\begin{theorem}\label{thm3}
$$\gamma_k^p(K_{m,n})=\left\lbrace \begin{array}{ll}
k; & m=1, n\geq 1 \\ 
k; & k \textrm{ is even}, m=2, n\geq 2 \\
\frac{3k+1}{2} \textrm{ or } k\frac{3n-2}{2n-1}; &  k \textrm{ is odd, } m=2, n\geq 2 \\
\frac{2mn-m-n}{mn-1}k \textrm{ or } \frac{4m-3}{2m}k \leq \gamma_k^p(K_{m,n}); & k \textrm{ is even}, n\geq m\geq 3 \\
\frac{2mn-m-n}{mn-1}k \textrm{ or } \frac{(4m-3)k+1}{2m} \leq \gamma_k^p(K_{m,n}); & k \textrm{ is odd}, n\geq m\geq 3 \\
\end{array} \right.$$
In the case $n\geq m\geq 3$, the value of $\gamma_k^p(K_{m,n})$ can be gained by solving the systems \ref{1.1.1} or \ref{1.2} or \ref{1.3} in the proof if they are consistent and else it equals to $2k$.
\end{theorem}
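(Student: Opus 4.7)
The plan is to run an $m$-by-$m$ case analysis in parallel with the proof of Theorem~\ref{thm1}, replacing the inequality constraint by the sharper equality that the perfect property demands. The easy cases come first. For $m=1$, the assignment $f(u_1)=k$, $f(v_j)=0$ yields $\sum_{w\in N_G[v_j]}f(w)=k$ for each $j$, so $f$ is perfect of weight $k$, and Remarks~\ref{2.2}(6) supplies the matching lower bound. For $m=2$ with $k$ even, the symmetric choice $f(u_1)=f(u_2)=k/2$, $f(v_j)=0$ again gives a perfect function of weight $k$. For $m=2$ with $k$ odd, I would split on whether both $u_i$ are heavy (weight $\ge(k+1)/2$) or at least one is light (weight $<k/2$), using the structural observation below and then minimising in each subcase.

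The structural observation, used throughout, is that if $u,u'\in A$ both have $f<k/2$, then the perfect equalities $f(u)+\sum_{B}f=f(u')+\sum_{B}f=k$ force $f(u)=f(u')$; so all light vertices of $A$ share a common weight $\alpha$, and symmetrically those of $B$ share a common weight $\beta$. Writing $a,b$ for the numbers of heavy vertices and $S_A,S_B$ for their total weights, the perfectness conditions on light vertices read
\begin{align*}
 \alpha+(n-b)\beta+S_B &= k,\\
 \beta+(m-a)\alpha+S_A &= k,
\end{align*}
as soon as a light vertex exists on the relevant side. Substituting these into $w(f)=(m-a)\alpha+(n-b)\beta+S_A+S_B$ gives the clean identity $w(f)=2k-\alpha-\beta$ whenever both sides contain a light vertex. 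Thus minimising $w(f)$ becomes the discrete optimisation problem of \emph{maximising} $\alpha+\beta$ under the constraints $0\le\alpha,\beta<k/2$, each heavy weight lying in $\{\lceil k/2\rceil,\ldots,k\}$, and the two linear equalities above.

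For the main case $n\ge m\ge 3$, I first rule out the configuration in which one partite set, say $A$, is entirely heavy: then $S_A\ge m\lceil k/2\rceil\ge 3k/2$, which is incompatible with the light-$v$ equation $S_A=k-\beta-(m-a)\alpha\le k$; and if both sides are entirely heavy then $w(f)\ge (m+n)\lceil k/2\rceil\ge 3k$, which cannot be optimal. So for $m\ge 3$ one is always in the regime where $w(f)=2k-\alpha-\beta$, and the three sub-regimes $(a,b)=(0,0)$, $(a\ge 1,b=0)$ (together with its mirror), and $(a,b\ge 1)$ give rise to the three systems referenced in the statement. The continuous relaxation of the $(0,0)$-system yields $\alpha=k(n-1)/(mn-1)$, $\beta=k(m-1)/(mn-1)$ and hence $w(f)=k(2mn-m-n)/(mn-1)$; optimising the other two systems produces the stated expressions $\tfrac{4m-3}{2m}k$ or $\tfrac{(4m-3)k+1}{2m}$ according to the parity of $k$. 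The final value of $\gamma_k^p(K_{m,n})$ is then the minimum among those systems admitting an integer solution; when none does, the universal upper bound $2k$ from Remarks~\ref{2.2}(6)-style constructions is attained.

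The main obstacle I anticipate is the integer bookkeeping. Each of the three systems has a clean rational optimum, but whether this optimum is realisable by integer weights depends on delicate divisibility relations among $k,m,n$; moreover, the heavy weights decompose into individual integers in $\{\lceil k/2\rceil,\ldots,k\}$, imposing further feasibility constraints on $S_A,S_B$. Certifying that, whenever all three systems are infeasible in integers, no perfect Roman $k$-dominating function can have weight strictly less than $2k$ is the most subtle step and is precisely what justifies the ``else $2k$'' clause.
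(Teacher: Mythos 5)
Your proposal is correct and follows essentially the same route as the paper: the same splitting by the number of ``heavy'' vertices ($f\geq k/2$) in each partite set, the same observation that the perfect equalities force all light vertices on one side to share a common weight, the same reduction to the linear systems, and the same ``else $2k$'' fallback. Your identity $w(f)=2k-\alpha-\beta$ is a slightly cleaner packaging of what the paper computes case by case (it appears explicitly as $w(f)=2k-(i+j)$ in the paper's Case V), and it also disposes of the configurations with two or more heavy vertices on a side in one stroke; the only pieces still owed are the explicit weight-$\frac{3k+1}{2}$ construction for $m=2$, $k$ odd, and the summation argument yielding the stated bounds $\frac{4m-3}{2m}k$ and $\frac{(4m-3)k+1}{2m}$, both of which fall out of your framework without difficulty.
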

\begin{proof}
We separate the proof in different situations as follows:

\textbf{A.  ${\bf m=1}$ and ${\bf n\geq 1}$.} 

By assigning $k$ to the vertex of $A$ and zero to other vertices we get to a perfect Roman $k$-dominating function with weight $k$. So $\gamma_k^p (K_{1,n})=k$. 

\textbf{B. ${\bf k}$ is even, ${\bf m=2}$ and ${\bf n\geq 2}$.} 

By assigning $k/2$ to the vertices of $A$ and zero to the vertices of $B$ we get to a perfect Roman $k$-dominating function with weight $k$. So $\gamma_k^p (K_{2,n})=k$ for even integers $k$.

Now in other cases, suppose that $f$ is a perfect Roman $k$-dominating function. 

\textbf{C. ${\bf k}$ is odd, ${\bf m=2}$ and ${\bf n\geq 2}$.}

Case I. $f(u)\geq (k+1)/2$ for all $u\in V$. Then clearly since $|V|\geq 4$,
$$w(f)\geq 4(k+1)/2>(3k+1)/2.$$

Case II. $f(u)\geq (k+1)/2$ for all $u\in A$ and $f(v_1)<k/2$ for some $v_1\in B$ (or similarly $f(u_1)<k/2$ for some $u_1\in A$ and $f(v)\geq (k+1)/2$ for all $v\in B$). Then since $|A|=2$,
$k+1\leq f(v_1)+\sum_{u\in A}f(u)=k,$ which is a contradiction.

Case III. $f(u_1)<k/2$, $f(v_1)< k/2$ and $f(v_2)\geq (k+1)/2$ for some $u_1\in A$ and $v_1,v_2\in B$ (or similarly $f(u_1)<k/2$, $f(v_1)< k/2$ and $f(u_2)\geq (k+1)/2$ for some $u_1, u_2\in A$ and  $v_1\in B$). Then 
$$w(f)\geq (f(v_1)+\sum_{u\in A}f(u))+f(v_2)\geq k+(k+1)/2=(3k+1)/2.$$

Case IV.  $f(u)<k/2$ for all $u\in V$. Then since 
$$f(u_1)+\sum_{v\in B}f(v)=k=f(u_2)+\sum_{v\in B}f(v),$$
we should have $f(u_1)=f(u_2)=i$ for some integer $0\leq i \leq (k-1)/2$. Similarly, we should have $f(v)=j$ for all $v\in B$, where $j$ is an integer with $0\leq j \leq (k-1)/2$. So we have the following system of linear equations.
\begin{equation}\label{1.1}
\left\lbrace \begin{array}{l}
i+nj=k \\ 
j+2i=k
\end{array}\right.
\end{equation}
Solving the above system implies that
$$i=\frac{k(n-1)}{2n-1}, j=\frac{k}{2n-1},$$
when $i,j$ are integers with $0\leq i,j \leq (k-1)/2$. In particular we should have $2n-1 | k$. Therefore
in this case $w(f)=2i+nj=k\frac{3n-2}{2n-1}<\frac{3k+1}{2}$.

On the other hand, if we set 
$$f(u_1)=(k+1)/2, f(u_2)=(k-1)/2, f(v_1)=(k+1)/2$$
and
$$f(v)=0; \ \forall v\in B\setminus\{v_1\},$$
then $f$ is a perfect $k$-dominating function with weight $(3k+1)/2$. So $\gamma_k^p(K_{m,n})\leq (3k+1)/2$. 

The above arguments show that 
$$\gamma_k^p(K_{m,n})=(3k+1)/2,$$
 unless $k, n$ are integers such that the system of equations (\ref{1.1}) is consistent, which then $$\gamma_k^p(K_{m,n})=k\frac{3n-2}{2n-1}.$$

\textbf{D. ${\bf n\geq m \geq 3}$.} 

Case I. $f(u)\geq k/2$ for all $u\in V$. Then since $|V|\geq 6$, 
$$w(f)\geq 3k.$$

Case II. $f(u_1)<k/2$ for some $u_1\in A$ and $f(v)\geq k/2$ for all $v\in B$ (or similarly $f(v_1)<k/2$ for some $v_1\in B$ and $f(u)\geq k/2$ for all $u\in A$). Then since $|B|\geq 3$, 
$$k=f(u_1)+\sum_{v\in B}f(v)\geq 3k/2,$$
which is a contradiction.

Case III. $f(u_1)<k/2$, $f(u_2), f(u_3)\geq k/2$ and $f(v_1)<k/2$ for some $u_1, u_2, u_3\in A$ and $v_1\in B$ (or similarly $f(u_1)<k/2$, $f(v_1)<k/2$ and $f(v_2), f(v_3)\geq k/2$ for some $u_1\in A$ and $v_1, v_2, v_3\in B$). Then 
$$w(f)\geq f(u_2)+f(u_3)+(f(u_1)+\sum_{v\in B}f(v))\geq k/2+k/2+k=2k.$$
(Of course if $k$ is odd, this case is impossible, because $f(u_2), f(u_3)\geq (k+1)/2$ and so $k=f(v_1)+\sum_{u\in A}f(u)\geq k+1$ which is a contradiction.)

Case IV. $f(u_1)\geq k/2$ for some $u_1\in A$ and $f(u)<k/2$ for all $u\in V\setminus \{u_1\}$ (or similarly $f(v_1)\geq k/2$ for some $v_1\in B$ and $f(u)<k/2$ for all $u\in V\setminus \{v_1\}$). Then 
$$f(u_i)+\sum_{v\in B}f(v)=k, \ \forall i \textrm{ with } 2\leq i \leq m,$$
$$f(v_i)+\sum_{u\in A}f(u)=k, \ \forall i \textrm{ with } 2\leq i \leq m.$$
Summing up the above equalities shows that
$$mw(f)\geq 2(m-1)k+f(u_1)\geq 2(m-1)k+k/2=\frac{4m-3}{2}k,$$
when $k$ is even, and
$$mw(f)\geq 2(m-1)k+f(u_1)\geq 2(m-1)k+(k+1)/2=\frac{(4m-3)k+1}{2},$$
when $k$ is odd.
Hence
\begin{align*}
w(f)\geq \left\lbrace\begin{array}{c}
\frac{4m-3}{2m}k; \ \ k \textrm{ is even} \\ 
\frac{(4m-3)k+1}{2m}; \ \ k \textrm{ is odd}
\end{array} \right.
\end{align*}
Note that in this case we should have 
$$f(u)=i, f(v)=j, \ \forall u \in A\setminus \{u_1\}, v\in B,$$
for some $0\leq i,j <k/2$. Hence 
\begin{equation}\label{1.1.1}
\left\lbrace \begin{array}{l}
j+(m-1)i+f(u_1)=k\\ 
i+nj=k \\ 
0\leq i,j < k/2\\ 
 f(u_1)=k-(m-1)i-j\geq k/2.
\end{array}\right.
\end{equation}
So $w(f)=k+(n-1)j$. Note that for example when $k=10$ and $m=n=3$ by setting $f(u_1)=5, f(u_2)=f(u_3)=1, f(v_1)=f(v_2)=f(v_3)=3$ we get to a perfect Roman $10$-dominating function with weight $16$. But when $k=4$ and $m=n=3$, the system \ref{1.1.1} is inconsistent.

Case V.  $f(u_1)\geq k/2, f(v_1)\geq k/2$ for some $u_1\in A$ and $v_1\in B$ and $f(u)<k/2$ for all $u\in V\setminus \{u_1,v_1\}$. Then 
$$f(u_i)+\sum_{v\in B}f(v)=k, \ \forall i \textrm{ with } 2\leq i \leq m,$$
$$f(v_i)+\sum_{u\in A}f(u)=k, \ \forall i \textrm{ with }  2\leq i \leq m.$$ 
Summing up the above equalities shows that
$$mw(f)\geq 2(m-1)k+f(u_1)+f(v_1)\geq 2(m-1)k+k/2+k/2=(2m-1)k,$$
when $k$ is even and 
$$mw(f)\geq 2(m-1)k+f(u_1)+f(v_1)\geq 2(m-1)k+(k+1)/2+(k+1)/2=(2m-1)k+1,$$
when $k$ is odd. Hence
\begin{align*}
w(f)\geq \left\lbrace\begin{array}{c}
\frac{2m-1}{m}k; \ \ k \textrm{ is even} \\ 
\frac{(2m-1)k+1}{m}; \ \ k \textrm{ is odd}
\end{array} \right.
\end{align*}
Note that in this case we should have 
$$f(u)=i, f(v)=j, \ \forall u \in A\setminus \{u_1\}, v\in B\setminus \{v_1\},$$
for some $0\leq i,j <k/2$. Hence 
\begin{equation}\label{1.2}
\left\lbrace \begin{array}{l}
j+(m-1)i+f(u_1)=k\\ 
i+(n-1)j+f(v_1)=k \\ 
0\leq i,j < k/2\\ 
 f(u_1)=k-(m-1)i-j\geq k/2\\
f(v_1)=k-(n-1)j-i\geq k/2.
\end{array}\right.
\end{equation}
So $w(f)=2k-(i+j)$. 
As you may see in this case the precise value of $w(f)$ is related to values of $m, n$ and $k$, too. For instance when $k=10, m=3$ and $n=3$ or $4$, by setting $i=2, j=1$ we have $w(f)=17$ which is the least integer greater than $\frac{2m-1}{m}k$ in this case but the perfect Roman $10$-dominating function defined in Case IV had less weight. As another example when $m=n=3, k=4$, the minimum weight in this case is when $i=1, j=0$ and so it is $7$, while the system \ref{1.1.1} doesn't have any solution and the next case give a $\gamma_k^p$-function with weight $6$. 

Case VI. $f(u)<k/2$ for all $u\in V$. Then since $f(u)+\sum_{v\in B}f(v)=k$ for all $u\in A$, we have $f(u)=i$ for some $0\leq i <k/2$. Similarly for all $v\in B$, $f(v)=j$ for some $0\leq j <k/2$. Hence we have the following system of linear equations.
\begin{equation}\label{1.3}
\left\lbrace \begin{array}{l}
i+nj=k \\ 
j+mi=k
\end{array}\right.
\end{equation}
Solving the above system implies that
$$i=\frac{n-1}{mn-1}k, j=\frac{m-1}{mn-1}k.$$
Therefore $w(f)=mi+nj=\frac{2mn-m-n}{mn-1}k$ in this case.

Note that for all positive integers $m,n$ and $k$ we have
$$\frac{2mn-m-n}{mn-1}k<\frac{2m-1}{m}k<2k, \frac{4m-3}{2m}k<\frac{2m-1}{m}k<2k.$$

Hence, a $\gamma_k^p$-function can be gained by solving one of the systems \ref{1.1.1} or \ref{1.2} or \ref{1.3}. If they don't have any solution, then we should have $\gamma_k^p(K_{m,n})=2k$. 
\end{proof}

In the last main result of our note, we specify the value of Roman $k$-domination number of complete bipartite graph as follows.
\begin{theorem}\label{thm4}
$$\gamma_k(K_{m,n})=\left\lbrace \begin{array}{ll}
k; & m=1, n\geq 1 \\ 
k; & k \textrm{ is even}, m=2, n\geq 2 \\
k+1; &  k \textrm{ is odd, } m=2, n\geq 2 \\
\frac{3k}{2}; & k \textrm{ is even}, m=3, n\geq 3 \\
\frac{3k+1}{2}; & k=1,3 , m=3, n\geq 3 \\  
\frac{3k+1}{2}; & k \textrm{ is odd}, m=n=3 \\
\frac{3k+3}{2}; & 5\leq k \textrm{ is odd}, m=3, n\geq 6 \\
\frac{3k+3}{2}; & 11\leq k \textrm{ is odd}, m=3, n=5 \\
\frac{3k+3}{2}; & 17\leq k \textrm{ is odd}, m=3, n=4 \\
\frac{3k+1}{2} \textrm{ or } \frac{3k+3}{2}; & 5\leq k \leq 15, k \textrm{ is odd}, m=3, n=4 \\
\frac{3k+1}{2} \textrm{ or } \frac{3k+3}{2}; & 5\leq k \leq 9, k \textrm{ is odd}, m=3, n=5 \\
\frac{2m}{m+1}k \leq \gamma_k(K_{m,n})\leq 2k; & n\geq m\geq 4 
\end{array} \right.$$
\end{theorem}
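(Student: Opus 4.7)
\medskip

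\noindent\textbf{Plan.} I would mirror the case-analysis strategy of Theorems~\ref{thm2} and~\ref{thm3}. Write $s_A=\sum_{u\in A}f(u)$, $s_B=\sum_{v\in B}f(v)$, $A_0=\{u\in A:f(u)<k/2\}$, $B_0=\{v\in B:f(v)<k/2\}$, and set $p=|A_0|$, $q=|B_0|$. Two structural bounds govern every row of the table: if $p=0$ then $s_A\geq m\lceil k/2\rceil$ (symmetrically for $q=0$), and if $u^*\in A_0$ realises $\min_{A_0}f$ then $s_B\geq k-f(u^*)$ is forced, which yields $s_B\geq\lceil k/2\rceil+1$ when $k$ is even and $s_B\geq(k+1)/2$ when $k$ is odd. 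The proof would loop over the three regimes ``$p=0$'', ``$q=0$'', ``both positive'' within each row.

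The matching upper bounds come from explicit constructions of three types: (i)~concentrate weight $k$ on a single vertex (for $m=1$); (ii)~place $\lceil k/2\rceil$ uniformly on $A$ and $0$ on $B$ (the workhorse when $m$ is small and $k$ is even, giving weight $m\lceil k/2\rceil$); and (iii)~a ``uniform-or-near-uniform'' assignment with $f(u)=i$ on $A$, $f(v)=j$ on $B$ (possibly perturbed by one heavy vertex) where $(i,j)$ solves the Diophantine system $i+nj=k$, $j+mi=k$ in nonnegative integers with $0\leq i,j<k/2$. The odd-$k$ rows for $m\in\{2,3\}$ will additionally need hybrid constructions combining values $(k\pm 1)/2$, in the spirit of Cases~IV and~V in the proof of Theorem~\ref{thm3}. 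Verifying each construction is a direct check that $f(u)+s_B\geq k$ for $u\in A_0$ and its symmetric partner hold.

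For $n\geq m\geq 3$ the lower bounds come from a small linear program. For fixed $(p,q)$, summing $f(u)+s_B\geq k$ over $u\in A_0$ together with $f(u)\geq\lceil k/2\rceil$ over $u\in A_1$ gives $s_A\geq p(k-s_B)+(m-p)\lceil k/2\rceil$, and symmetrically $s_B\geq q(k-s_A)+(n-q)\lceil k/2\rceil$. Minimising $s_A+s_B$ over this LP, as $(p,q)$ ranges over $\{0,\dots,m\}\times\{0,\dots,n\}$, reproduces the competing expressions in the statement: $mk/2$ at $(p,q)=(0,n)$, $(2mn-m-n)k/(mn-1)$ at $(m,n)$, and $(4m-3)k/(2m)$ at a single-heavy-vertex configuration. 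For $n\geq m\geq 4$ the uniform lower envelope $\frac{2m}{m+1}k$ matches the $m=n$ specialisation of the extremal LP corner and serves as a valid (though not always tight) bound for all such $n$.

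The hard part will be the integrality thresholds in the borderline rows $m=3$, $n\in\{4,5\}$ with $k$ odd. There the LP minimum $(3k+1)/2$ is realised by an integer configuration precisely when the Diophantine system $i+nj=k$, $j+mi=k$ admits a solution with $0\leq i,j<k/2$; once $k$ exceeds the stated thresholds ($k\geq 17$ for $n=4$, $k\geq 11$ for $n=5$) this fails and the minimum jumps to $(3k+3)/2$. Pinpointing the jump requires a finite case check on the ``one heavy vertex'' and ``two heavy vertices'' extremals parallel to Cases~IV and~V in the proof of Theorem~\ref{thm3}, combined with an infeasibility argument for the Diophantine system. This, together with the large number of parity-dependent sub-cases, is the technical heart of the proof.
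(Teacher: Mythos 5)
Your overall architecture matches the paper's: case analysis on which vertices receive weight below $k/2$, lower bounds by summing the closed-neighbourhood inequalities, and explicit uniform or near-uniform constructions for the upper bounds. The rows $m\in\{1,2\}$, the even-$k$ row for $m=3$, and the general bound $\frac{2m}{m+1}k\leq\gamma_k(K_{m,n})\leq 2k$ for $n\geq m\geq 4$ would all come out of your plan essentially as they do in the paper.

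However, there is a genuine gap exactly where you locate the ``technical heart.'' You claim that for $m=3$, $n\in\{4,5\}$, $k$ odd, the value $(3k+1)/2$ is attained \emph{precisely when} the Diophantine system $i+nj=k$, $j+mi=k$ has an integer solution with $0\leq i,j<k/2$. That criterion belongs to the \emph{perfect} version (Theorem~\ref{thm3}), where the domination condition is an equality; for ordinary Roman $k$-domination the constraints are inequalities, so non-uniform all-small configurations are admissible. Concretely, for $k=5$, $m=3$, $n=4$ the system $i+4j=5$, $j+3i=5$ has no nonnegative integer solution ($i=20/11$), yet $f(u_1)=2$ and $f\equiv 1$ elsewhere is a Roman $5$-dominating function of weight $8=(3\cdot 5+1)/2$, so $\gamma_5(K_{3,4})=(3k+1)/2$. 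Your criterion would therefore misclassify this case, and it cannot produce the thresholds $k\geq 17$ (for $n=4$) and $k\geq 11$ (for $n=5$). The paper's actual mechanism is different: from $f(u_i)+s_B\geq k$ and $f(v_j)+s_A\geq k$ one gets $f(u_i)+f(v_j)+w(f)\geq 2k$, hence $f(u_i)+f(v_j)\geq (k-1)/2$ when $w(f)=(3k+1)/2$; setting $y=\min_{v\in B}f(v)$ and expanding $w(f)$ as a sum of three such pairs plus the leftover $B$-vertices forces $y\leq 2/(n-3)$, and a finite check over $y\in\{0,1,2\}$ yields $k\leq 3$, $k\leq 9$, $k\leq 15$ respectively, which is where the thresholds $n\geq 6$, $n=5$, $n=4$ come from. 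Your proposal gestures at a ``finite case check on one- and two-heavy-vertex extremals,'' but without the $y$-based argument (or an equivalent integer-programming infeasibility argument with \emph{inequality} constraints) the jump points are not pinned down, and the stated equivalence with the equality system is false.
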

\begin{proof}
We separate the proof in different situations as follows:

\textbf{A.  ${\bf m=1}$ and ${\bf n\geq 1}$.} 

By assigning $k$ to the vertex of $A$ and zero to other vertices we get to a Roman $k$-dominating function with weight $k$. So $\gamma_k (K_{1,n})=k$. 

\textbf{B. ${\bf k}$ is even, ${\bf m=2}$ and ${\bf n\geq 2}$.} 

By assigning $k/2$ to the vertices of $A$ and zero to other vertices we get to a Roman $k$-dominating function with weight $k$. So $\gamma_k (K_{2,n})=k$ for even integers $k$. 

Hereafter suppose that $f$ is a Roman $k$-dominating function.

\textbf{C. ${\bf k}$ is odd, ${\bf m=2}$ and ${\bf n\geq 2}$.} 

Case I. $f(u)\geq (k+1)/2$ for all $u\in A$ (or similarly $f(v)\geq (k+1)/2$ for all $v\in B$). Then since $|A|\geq 2$, clearly we have $w(f)\geq k+1$.

Case II.  $f(u_1)<k/2$ and $f(u_2)\geq (k+1)/2$ (or similarly $f(v_1)<k/2$ and $f(v_2)\geq (k+1)/2$ for some $v_1, v_2\in B$). Then 
\begin{align*}
w(f)&\geq f(u_2)+(f(u_1)+\sum_{v\in B}f(v))\\
&\geq (k+1)/2+k\\
&\geq k+1.
\end{align*}

Case III.  $f(u)<k/2$ for all $u\in V$. Therefore
$$f(u_1)+\sum_{v\in B}f(v)\geq k, f(u_2)+\sum_{v\in B}f(v)\geq k,$$
$$f(v_1)+\sum_{u\in A}f(u)\geq k, f(v_2)+\sum_{u\in A}f(u)\geq k,$$
for some $v_1, v_2\in B$.
Summing up the above inequalities implies that $3w(f)\geq 4k$, which ensures that $w(f)\geq k+1$, since $w(f)$ is an integer.

The above cases show that $\gamma_k(K_{2,n})\geq k+1$. By assigning $(k+1)/2$ to the vertices of $A$ and zero to the vertices of $B$, we get to a Roman $k$-dominating function of $K_{m,n}$ with weight $k+1$. Hence $\gamma_k(K_{2,n})=k+1$,
in this case.

\textbf{D. ${\bf n\geq m\geq 3}$.}

Case I.  $f(u)\geq k/2$ for all $u\in A$ (or similarly $f(v)\geq k/2$ for all $v\in B$). Then 
$$w(f)\geq mk/2\geq \frac{2m}{m+1}k.$$
(of course if $k$ is odd, we even have  $w(f)\geq m(k+1)/2$.)

Case II.  $f(u_1)<k/2$ and $f(u_2), f(u_3)\geq k/2$ for some $u_1, u_2, u_3\in A$ (or similarly  $f(v_1)<k/2$ and  $f(v_2), f(v_3)\geq k/2$ for some $v_1, v_2,v_3\in B$). Then 
$$w(f)\geq (f(u_1)+\sum_{v\in B}f(v))+f(u_2)+f(u_3)\geq k+k/2+k/2=2k.$$

Case III. $f(u_1)\geq k/2$ and $f(u)<k/2$ for all $u\in V\setminus \{u_1\}$ (or similarly $f(v_1)\geq k/2$ and $f(u)<k/2$ for all $u\in V\setminus \{v_1\}$). Then 
$$f(u_i)+\sum_{v\in B}f(v)\geq k, \ \forall i \textrm{ with }   2\leq i \leq m,$$
$$f(v_i)+\sum_{u\in A}f(u)\geq k, \ \forall i \textrm{ with }  1\leq i \leq m.$$ 
Note that since there exists $u_i\in A$ with $f(u_i)<k/2$ and $f(u_i)+\sum_{v\in B}f(v)\geq k$, we should have $\sum_{v\in B}f(v)\geq k/2$. So, summing up the above inequalities implies $(m+1)w(f)\geq (2m-1)k+f(u_1)+\sum_{v\in B}f(v)\geq 2mk$. Hence
$$w(f)\geq \frac{2m}{m+1}k.$$

Case IV. $f(u_1)\geq k/2, f(v_1)\geq k/2$ for some $u_1\in A, v_1\in B$ and $f(u)<k/2$ for all $u\in V\setminus \{u_1, v_1\}$. Then
$$f(u_i)+\sum_{v\in B}f(v)\geq k, \ \forall i \textrm{ with }  2\leq i \leq m,$$
$$f(v_i)+\sum_{u\in A}f(u)\geq k, \ \forall i \textrm{ with }  2\leq i \leq m.$$
Summing up the above inequalities implies 
$$mw(f)\geq (2m-2)k+f(u_1)+f(v_1)\geq (2m-1)k.$$
So
$$w(f)\geq \frac{2m-1}{m}k> \frac{2m}{m+1}k.$$

Case V. $f(u)<k/2$ for all $u\in V$. Then 
$$f(u_i)+\sum_{v\in B}f(v)\geq k, \ \forall i \textrm{ with }  1\leq i \leq m,$$
$$f(v_i)+\sum_{u\in A}f(u)\geq k, \ \forall i \textrm{ with }  1\leq i \leq m.$$
Summing up the above inequalities implies $(m+1)w(f)\geq 2mk$. So
$$w(f)\geq \frac{2m}{m+1}k.$$

The above cases show that $\gamma_k(K_{m,n})\geq \frac{2m}{m+1}k$ and so 
\begin{equation}
\frac{2m}{m+1}k \leq \gamma_k(K_{m,n})\leq 2k.
\end{equation}

Note that for special values of $k$, we may have $w(f)=\frac{2m}{m+1}k$. For instance, when $k=(m+1)r$ for some integer $r$, by assigning $r$ to each vertex of $K_{m,m}$ we get to a Roman $k$-dominating function with weight $\frac{2m}{m+1}k$.

If $m=3$, then $(5)$ shows that $w(f)\geq 3k/2$. Also if $k$ is even, then by assigning $k/2$ to the vertices of $A$ and zero to the vertices of $B$, one can get to a  Roman $k$-dominating function of $K_{m,n}$ with weight $3k/2$. So 
$$\gamma_k(K_{3,n})=3k/2,$$
when $n\geq 3$ and $k$ is even.

If $m=3$ and $k=1$, then since $w(f)$ is an integer, $(5)$ implies that 
$$\gamma_k(K_{m,n})=(3k+1)/2=2k.$$

If $m=3$ and $1<k$ is odd, then by assigning $(k+1)/2$ to the vertices of $A$ and zero to the vertices of $B$, we have a Roman $k$-dominating function with weight $3(k+1)/2$. Thus $\gamma_k(K_{m,n}) \leq (3k+3)/2 $. So, by $(5)$  for $m=3$, we have 
$$3k/2 \leq \gamma_k(K_{m,n})\leq (3k+3)/2.$$
 Hence since $w(f)$ is an integer, we have $\gamma_k(K_{m,n})=(3k+1)/2$ or $\gamma_k(K_{m,n})=(3k+3)/2$. 

When $m=n=3$, by setting
 $$f(u_1)=(k-3)/4, f(u_2)=f(u_3)=f(v_1)=f(v_2)=f(v_3)=(k+1)/4,$$
 when $k\overset{4}{\equiv} 3$  and 
$$f(u_1)=f(v_1)=(k+3)/4, f(u_2)=f(u_3)=f(v_2)=f(v_3)=(k-1)/4,$$
 when $k\overset{4}{\equiv}1$,  we get to a Roman $k$-dominating function with weight $(3k+1)/2$. Thus 
$$\gamma_k(K_{3,3})=(3k+1)/2.$$
Also, when $k=3$ and $n\geq m=3$, if $f(u)=1$ for all $u\in A$, $f(v_1)=2$ for some $v_1\in B$ and $f(v)=0$ for all $v\in B\setminus \{v_1\}$, then $f$ is a Roman $3$-dominating function with weight $5$ and so 
$$\gamma_3(K_{3,n})=(3k+1)/2.$$
Now suppose that $k>3$ and $n>m=3$. Also assume that there exists a Roman $k$-dominating function, say $f$, with $w(f)=(3k+1)/2$. As the above cases illustrate, it may exists in Cases III or V. 

Firstly in Case III, suppose that $f(v_n)\geq (k+1)/2$ and $f(u)<k/2$ for all $u\in V\setminus \{v_n\}$. Then
$$f(u_i)+\sum_{v\in B}f(v)\geq k, \ \forall i \textrm{ with }   1\leq i \leq 3, $$
$$f(v_i)+\sum_{u\in A}f(u)\geq k, \ \forall i \textrm{ with }   1\leq i \leq n-1.$$
Summing the above inequalities implies that
$$3w(f)+(w(f)-f(v_n))+(n-4)\sum_{u\in A}f(u)\geq (n+2)k.$$
Since $f(u_1)\leq (k-1)/2$, $f(u_1)+\sum_{v\in B}f(v)\geq k$ ensures that $\sum_{v\in B}f(v)\geq (k+1)/2$. Hence
\begin{align*}
nw(f)&\geq (n+2)k+f(v_n)+(n-4)\sum_{v\in B}f(v)\\
&\geq (n+2)k+(k+1)/2+(n-4)(k+1)/2\\
&=(3nk+n+k-3)/2.
\end{align*}
Therefore $k>3$ implies
$$\frac{3k+1}{2}=w(f)\geq \frac{3k+1}{2}+\frac{k-3}{2n}> \frac{3k+1}{2},$$
which is a contradiction.

Now suppose that in Case III, $f(u_3)\geq (k+1)/2$ and $f(u)<k/2$ for all $u\in V\setminus \{v_n\}$. Then
$$f(u_i)+\sum_{v\in B}f(v)\geq k, \ \forall i \textrm{ with }   1\leq i \leq 2, $$
$$f(v_j)+\sum_{u\in A}f(u)\geq k, \ \forall j \textrm{ with }   1\leq j \leq n.$$
Summing the above inequalities for each $1\leq i \leq 2$ and $1\leq j \leq n$ implies that $f(u_i)+f(v_j)+w(f)\geq 2k$. So, $w(f)=(3k+1)/2$ yields to
\begin{equation}\label{k-1}
f(u_i)+f(v_j)\geq 2k-(3k+1)/2=(k-1)/2. 
\end{equation}
Hence if $y=\min\{f(v) \ | \ v\in B\}$, then
\begin{align*}
(3k+1)/2=w(f)&=(f(u_1)+f(v_1))+(f(u_2)+f(v_2))+f(u_3)+\sum_{i=3}^nf(v_i)\\
&\geq (k-1)/2+(k-1)/2+(k+1)/2+(n-2)y\\
&=(3k-1)/2+(n-2)y.
\end{align*}
Thus $y\leq 1/(n-2)$, which implies that $y=0$, since $n>3$. Now, (\ref{k-1}) and $f(u_1), f(u_2)\leq (k-1)/2$ yield to $f(u_1)=f(u_2)=(k-1)/2$. So since $f(u_1)+\sum_{v\in B}f(v)\geq k$, we should have $\sum_{v\in B}f(v)\geq (k+1)/2$. Therefore
\begin{align*}
(3k+1)/2=w(f)&=f(u_1)+f(u_2)+f(u_3)+\sum_{v\in B}f(v)\\
&\geq (k-1)/2+(k-1)/2+(k+1)/2+(k+1)/2\\
&=2k,
\end{align*}
which is a contradiction. So, such $f$ can't be gained by Case III.

Moreover, in Case V we have $f(u)<k/2$ for all $u\in V$. Then
$$f(u_i)+\sum_{v\in B}f(v)\geq k, \ \forall i \textrm{ with }  1\leq i \leq 3,$$
$$f(v_j)+\sum_{u\in A}f(u)\geq k,  \ \forall j \textrm{ with }  1\leq j \leq n.$$
Summing the above inequalities for each $1\leq i \leq 3$ and $1\leq j \leq n$ implies that $f(u_i)+f(v_j)+w(f)\geq 2k$. So, $w(f)=(3k+1)/2$ yields to
\begin{equation}\label{k-2}
f(u_i)+f(v_j)\geq 2k-(3k+1)/2=(k-1)/2. 
\end{equation}
Hence if $y=\min\{f(v) \ | \ v\in B\}$, then
\begin{align*}
(3k+1)/2=w(f)&=(f(u_1)+f(v_1))+(f(u_2)+f(v_2))+(f(u_3)+f(v_3))+\sum_{i=4}^nf(v_i)\\
&\geq 3(k-1)/2+(n-3)y.
\end{align*}
Thus $y\leq 2/(n-3)$. Now if $y=0$, then (\ref{k-2}) and $f(u_i)\leq (k-1)/2$ yield to $f(u_i)=(k-1)/2$ for all $i=1,2,3$. Hence
$$(3k+1)/2=w(f)=\sum_{u\in A}f(u)+\sum_{v\in B}f(v)\geq 3(k-1)/2+(k+1)/2=(4k-2)/2.$$
This implies that $k\leq 3$ which is a contradiction. Hence when $5\leq k$ is an odd number and $n\geq 6$, such $f$ can't be gained by Case V. So
$$\gamma_k(K_{3,n})=(3k+3)/2,$$
when $k\geq 5$ is an odd number, $n\geq 6$ and $m=3$.

Now if $y=1$, then (\ref{k-2}) and $f(u_i)\leq (k-1)/2$ yield to $f(u_i)\geq (k-3)/2$ for all $i=1,2,3$. Hence
$$(3k+1)/2=w(f)=\sum_{u\in A}f(u)+\sum_{v\in B}f(v)\geq 3(k-3)/2+(k+1)/2=(4k-8)/2.$$
This implies that $k\leq 9$. Hence 
$$\gamma_k(K_{3,5})=(3k+3)/2,$$
when $k\geq 11$ is an odd number.

Also, if $y=2$, then similarly we have 
$$(3k+1)/2=w(f)=\sum_{u\in A}f(u)+\sum_{v\in B}f(v)\geq 3(k-5)/2+(k+1)/2=(4k-14)/2.$$
This implies that $k\leq 15$. Hence 
$$\gamma_k(K_{3,4})=(3k+3)/2,$$
when $k\geq 17$ is an odd number.
 
 These complete the proof. Note that as discussed earlier, when either $k$ is odd with  $5\leq k \leq 15$ and $m=3, n=4$ or $k$ is odd with $5\leq k \leq 9$ and $m=3, n=5$, the precise value of $\gamma_k(K_{m,n})$ can be found by Case V and $y=\min\{f(v) \ | \ v\in B\}=1$ or $2$. For instance when $k=5, n=4, m=3$, if $f(u_1)=2$ and $f(v)=1$ for all $v\in V\setminus \{u_1\}$, then  $f$ is a Roman $5$-dominating function with weight $8$ and so 
$$\gamma_5(K_{3,4})=(3k+1)/2.$$
When $k=7, n=5, m=3$, if $f(u)=2$ and $f(v)=1$ for all $u\in A$ and $v\in B$, then  $f$ is a Roman $7$-dominating function with weight $11$ and so 
$$\gamma_7(K_{3,5})=(3k+1)/2.$$
When $k=11, n=4, m=3$, if $f(u)=3$ and $f(v)=2$ for all $u\in A$ and $v\in B$, then  $f$ is a Roman $11$-dominating function with weight $17$ and so 
$$\gamma_{11}(K_{3,4})=(3k+1)/2.$$
But when $k=5, n=5, m=3$, the above discussion shows that $y=1$ and so $f(u_i)=1$ or $2$. Since $y+\sum_{u\in A}f(u)\geq 5$ and $f(v)\geq y$ for all $v\in B$, we have
$$w(f)=\sum_{u\in A}f(u)+\sum_{v\in B}f(v)\geq 9>(3k+1)/2.$$
Hence
$$\gamma_5(K_{3,5})=(3k+3)/2.$$
Therefore, in each case it should be discussed.

\end{proof}

We end this note by the following remark.
\begin{remark}
We can use Theorems \ref{thm2} and \ref{thm4} and Remarks \ref{2.2}(7) to gain or bound (strong) Roman $k$-domination number of some other classes of graphs. For example if $G=F_{m,n}$ is a fan graph, (recall that $F_{m,n}$ is the join of the empty graph with $n$ vertices and the path graph with $m$ vertices), or $G$ is the join of two paths, join of two cycles or join of path and cycle, then clearly $K_{m,n}$ is a spanning subgraph of $G$ for some integers $m$ and $n$. So Remarks \ref{2.2}(7) implies that
$$\gamma_k(G)\leq \gamma_k(K_{m,n}), \gamma_k^s(G)\leq \gamma_k^s(K_{m,n}).$$
Therefore, Theorems \ref{thm2} and \ref{thm4} can obtain some bounds for (strong) Roman $k$-domination number of such graphs.
\end{remark}

\textbf{Acknowledgement.} The author is deeply grateful to the
referee for careful reading of the manuscript and helpful
suggestions.

\end{document}